\documentclass[reqno,12pt]{amsart}
\usepackage{amsmath}
\usepackage{amssymb}
\usepackage{amscd}
\input{epsf}

\oddsidemargin -.5cm \evensidemargin -.5cm \textwidth 17.3cm
\textheight 22cm

 \DeclareMathOperator{\Z}{\mathbb
Z} \DeclareMathOperator{\C}{\mathbb C}
\DeclareMathOperator{\Pn}{\mathbb P}
\DeclareMathOperator{\alphabar}{\underline{\alpha}}
\DeclareMathOperator{\N}{\mathbb N}

\newtheorem{fact}{Fact}[section]
\newtheorem{lemma}[fact]{Lemma}
\newtheorem{theorem}[fact]{Theorem}
\newtheorem{definition}[fact]{Definition}
\newtheorem{example}[fact]{Example}
\newtheorem{rremark}[fact]{Remark}

\newenvironment{remark}{\begin{rremark} \rm}{\end{rremark}}

\DeclareMathOperator{\E}{\mathcal E} \DeclareMathOperator{\tp}{Tp}
\DeclareMathOperator{\codim}{codim} \DeclareMathOperator{\ts}{Ts}
\def\F{\mathbf F}
\def\G{\mathbf G}
\def\P{\mathbb P}
\def\B{\mathbf B}
\DeclareMathOperator{\Gr}{Gr} \DeclareMathOperator{\Q}{\mathbb Q}

\title{The general quadruple point formula}

\author{R. Marangell}
\address{Department of Mathematics, University of North Carolina at Chapel Hill, USA}
\email{roble81@email.unc.edu}

\author{R. Rim\'anyi}
\address{Department of Mathematics, University of North Carolina at Chapel Hill, USA}
\email{rimanyi@email.unc.edu}

\begin{document}

\thanks{\noindent Supported by NSF grants DMS-0405723 (2nd author)\\
AMS Subject classification 14N10, 57R45}

\begin{abstract} Maps between manifolds $M^m\to N^{m+\ell}$ ($\ell>0$) have
multiple points, and more generally, multisingularities. The closure
of the set of points where the map has a particular multisingularity
is called the multisingularity locus. There are universal relations
among the cohomology classes represented by multisingularity loci,
and the characteristic classes of the manifolds. These relations
include the celebrated Thom polynomials of monosingularities. For
multisingularities, however, only the form of these relations is
clear in general (due to Kazarian \cite{kazamulti}), the concrete
polynomials occurring in the relations are much less known. In the
present paper we prove the first general such relation outside the
region of Morin-maps: the general quadruple point formula. We apply
this formula in enumerative geometry by computing the number of
4-secant linear spaces to smooth projective varieties. Some other
multisingularity formulas are also studied, namely 5, 6, 7 tuple
point formulas, and one corresponding to $\Sigma^2\Sigma^0$
multisingularities.
\end{abstract}

\maketitle

\section{Introduction}
\label{intro}

Let $f:M^m\to N^n$ be a holomorphic map between compact complex
manifolds, and let $\ell=n-m> 0$. Associated with a list of
singularities $\alphabar=(\alpha_1,\ldots,\alpha_r)$ one can
consider the following multisingularity locus in the source manifold
$M$: the collection of points $x$ where the map has singularity
$\alpha_1$, and $f(x)$ has another $r-1$ preimages
$\{x_2,x_3,...,x_r \}$, with $f$ having singularities $\alpha_i$ at
$x_i$. We will be concerned with the cohomology class
$m_{\alphabar}\in H^*(M)$ represented by the closure of the
multisingularity locus, and its image $n_{\alphabar}\in
H^*(N)=H^*(N;\Q)$ under the Gysin homomorphism. In the whole paper
cohomology is meant with rational coefficients.

The cohomology classes $m_{\alphabar}$, $n_{\alphabar}$ satisfy
universal identities.  The word ``universal'' means that the
dependence of these identities on the manifolds $M$, $N$, and the
map $f$, is only via the characteristic classes $c(TM)$, and
$f^*(c(TN))$. We mention two such prototype formulas. First,
$m_{A_1}=c_{\ell+1}(f)$, that is the cohomology class represented by
the points where the map $f$ is singular is equal to the $\ell+1$'st
Chern class of the map, where $c(f)=c(f^*TM-TN)$. Another one is
$m_{A_0^2}=f^*(n_{A_0})-c_{\ell}(f)$. This identity expresses the
cohomology class represented by the double point locus, in terms of
the cohomology class $n_{A_0}$  represented by the image of $f$, and
the $\ell$th Chern class of $f$ (we will define precisely what the
singularities $A_1$ and $A_0^2$ are in Chapter \ref{KazaTh}).

The universal identities among multisingularity classes have
applications in differential topology and algebraic geometry. In
differential topology these identities can be used to show that a
certain multisingularity locus is not zero, provided only that
certain characteristic classes are nonzero. Hence certain
degenerations of the map are {\em forced} by the global topology of
the source and target spaces. The polynomials expressing
monosingularity classes (called Thom polynomials) are in close
relation with polynomials useful in geometry: e.g. relations in
presentations of cohomology rings of moduli spaces \cite{forms}, or
polynomials governing the combinatorics of Schubert calculus
\cite{lfrr02}, \cite{lfrr03}.

In the second half of the 20th century the main application of
multisingularity class identities were in enumerative geometry. When
we want to count certain geometric objects satisfying certain
properties, we can often encode the problem by setting up a map,
whose certain multisingularity locus is in bijection with the
counted objects. Characteristic classes of maps are usually easy to
handle, so using the universal identities, we have a formula for the
counted objects. This approach was used by e.g. Kleiman
\cite{kleiman}, Katz \cite{katz}, Colley \cite{colley}, and recently
by Kazarian  \cite{kazamorin}, \cite{kazamulti} with great success.
One advantage of this method is that it often avoids the problem of
excess intersection.

The limitation of this method is the fact that hardly any {\em
general} multisingularity identities are known. By ``general" we
mean a formula which is valid for all dimensional settings, and for
all maps with expected multisingularities. For example the two
prototype formulas above are valid for any dimension $m< n$, but
e.g. $m_{A_2}=c_1^2(f)+c_2(f)$ is only valid for maps with $\ell=0$.
Some `general' monosingularity formulas are known, and only three
general multisingularity formulas. These are the double-point
formula, the triple-point formula, and a formula concerning $A_0A_1$
multisingularities. The reason why higher formulas are considerably
harder is roughly speaking that higher multisingularities do not
only interfere with the simplest monosingularities of maps, the
so-called Morin singularities (a.k.a. corank-one, or curvilinear
singularities). The quadruple points of a map $M^m\to N^{m+\ell}$
have codimension $3\ell$ in the source, while the codimension of
non-Morin singularities is $2\ell+4$. Hence the non-Morin ones
interfere with the quadruple points.

Methods of algebraic geometry have been applied to find several
multiple point formulas that are valid for maps with only Morin
singularities, see works of Kleiman, Kazarian \cite{kleiman2},
\cite{kazamorin} and references therein. These formulas are not
general either, because they agree with the general formulas only up
to characteristic classes supported on non-Morin singularity loci.

The main result of the present paper is proving the general
$i$-tuple point formula (\ref{general-main}) for $i\leq7$. Our
method has three pillars, as follows: (i) Kazarian found the general
form of multisingularity formulas \cite{kazamulti}; (ii)
Rim\'anyi---based on Sz\H ucs's construction of classifying space of
multisingularities in \cite{rrsz98}---found an ``interpolation''
method to gather information on the polynomials governing the
multisigularity identities \cite{rrmulipt}; (iii) recently B\'erczi
and Szenes in \cite{bsz06} used advanced localizations to calculate
the Thom polynomial of the $A_i$ singularity for $i\leq 6$ (the
$A_3$ formula was announced in \cite{a3}). What we will show using
interpolation is that after certain identifications, the residue
polynomial of e.g. quadruple points is equal to the Thom polynomial
of $A_3$ singularities---for a different dimension setting.

Since the interpolation method is topological in nature, our method
is topological. In Section~\ref{4-secants} we will show an
enumerative geometry application. Namely, we will calculate the
number (the cohomology class) of 4-secant linear spaces to a smooth
projective variety. For a smooth surface in 10 dimensional
projective space we recover the Hilbert scheme calculation of
\cite{lehn99}, for the other dimension settings our result is new.

The advance of this paper, that is, the step from triple point
formulas to higher multiple point formulas can be compared with a
recent advance in algebraic geometry: from the study of the space of
triangles \cite{triangle} to the study of the space of tetrahedra
\cite{tetrahedron}.

Our approach to reduce multisingularity polynomials to some other,
easier and known ones has two limitations. First, not many general
Thom polynomials (called Thom series) are known; the best results
seem to be \cite{bsz06}, \cite{lfrrpp}. The other limitation is that
the direct interpolation method breaks down where moduli of
singularities occur, that is in $\codim\geq 6\ell+9$.

\smallskip

The authors are grateful to A. Buch, J. Damon, L. Feh\'er, and A.
Szenes for useful discussions.

\section{Singularities and multisingularities}
\label{KazaTh}

\subsection{Contact singularities of maps}
\label{Sings} Fix integers $m < n $ and let $\ell = n-m$. We say
that maps or map germs mapping from an $m$ dimensional space to an
$n$ dimensional space have {\em relative dimension} $\ell$.

Consider $\mathcal{E}(m,n)$, the vector space of holomorphic map
germs $(\C^m,0)\rightarrow \C^n$. The subspace consisting of germs
$(\C^m,0)\to (\C^n,0)$ is denoted by $\E^0(m,n)$. The vector space
$\mathcal{E}(m):=\mathcal{E}(m,1)$ is a local algebra with maximal
ideal $\E^0(m)$. The space $\mathcal{E}(m,n)$ is a module over
$\mathcal{E}(m)$, with $\E^0(m,n)$ a submodule. A map $f\in
\E^0(m,n)$ induces a pullback $f^*:\E(n)\to \E(m)$ by composition.

\begin{definition}
\label{localalglebra} The local algebra $Q_f$ of a germ $f \in
\E^0(m,n)$ is defined by $Q_f = \E(m)/ (f^{*}\E^0(n))$.
\end{definition}

We will be concerned with germs $f$ for which the local algebra is
finite dimensional; i.e. the ideal $(f^{*}\E^0(n))$ contains a power
of the maximal ideal. We call these germs {\em finite}. For such a
germ, in local coordinates, $f=(f_1(x_1, \ldots, x_m), \cdots,
f_n(x_1, \ldots, x_m))$, we have $Q_f = \C[[x_1, \ldots, x_m]] /
(f_1, \ldots, f_n)$.

\begin{definition} \label{contactequivalent}
For two germs, $f$ and $g$, we say that $f$ is {\em contact
equivalent} to $g$, if $Q_f \cong Q_g$; that is, their local
algebras are isomorphic. An equivalence class $\eta\subset
\E^0(m,n)$ will be called a (contact) singularity.
\end{definition}

In singularity theory one considers the so called contact group
$\mathcal{K}(m,n)$ acting on the vector space $\E^0(m,n)$, and
defines germs to be contact equivalent if they are in the same
orbit. It is a theorem of Mather \cite{mather6} that for finite germs
the two definitions are equivalent. Thus, for the rest of this paper
all singularities to which we refer will be finite in the previous sense.

\begin{remark} \label{remA} \rm
The group $\mathcal{K}(m,n)$ contains the group of holomorphic
reparametrizations of the source $(\C^m,0)$ and target spaces
$(\C^n,0)$. Hence for a map $f:M^m\to N^n$ between manifolds it
makes sense to talk about the contact singularity of $f$ at a point
in $M$. Hence, for a map $f:M^m\to N^n$ and a singularity
$\eta\subset \E^0(m,n)$, we can define the singularity subset
$$\eta(f)=\{x\in M| \text{ the germ of $f$ at $x$ belongs to }
\eta\}.$$
\end{remark}

\subsection{The zoo of singularities}
\label{zoo}
The classification of finite singularities is roughly the same as
the classification of finite dimensional commutative local
$\C$-algebras. Only `roughly', because for a given $m$ and $n$ only
algebras that can be presented by $m$ generators and $n$ relations
turn up as local algebras of singularities $\C^m,0\to \C^n,0$.

A natural approach is to try to classify singularities in the order
of their codimensions in $\E^0(m,n)$. For large $\ell$ the
classification of small codimensional singularities is as follows
(see e.g.~\cite{agvl}).

\begin{center}\begin{tabular}{l || c || c || c | c || c | c | c ||}
$\codim$ & 0 & $\ell+1$ & $2\ell+2$ & $2\ell +4$ & $3\ell+3$ &
$3\ell +4$ & $3\ell +5$ \\ \hline\hline
$\Sigma^0$ & $A_0$ & & & & & & \\
$\Sigma^1$ & & $A_1$ & $A_2$ & & $A_3$ & & \\
$\Sigma^2$ & & & & $III_{2,2}$ & & $I_{2,2}$ & $III_{2,3}$\\
\end{tabular}\end{center}
Here we use the following notations: $A_i$ means the singularity
with local algebra $\C[x]/(x^{i+1})$; $I_{a,b}$ means the
singularity with local algebra $\C[x,y]/(xy,x^a+y^b)$; and
$III_{a,b}$ means the singularity with local algebra
$\C[x,y]/(x^a,xy,y^b)$. The symbol $\Sigma^r$ is a property of a
singularity, it means that the derivative drops rank by $r$,
equivalently, that the local algebra can be minimally generated by
$r$ generators. The $\Sigma^{\leq 1}$ singularities are called Morin
singularities (a.k.a. corank 1, or curvilinear singularities). As
one studies singularities of high codimension, they appear in
moduli. However for the main result of the present paper we can
avoid working with them.

Observe that we gave the classification independent of $m$ and $n$,
that is, we gave the same name for singularities for different
dimension settings. E.g. the following are all $A_2$ germs:
$x\mapsto x^3$ ($m=n=1$), $(x,y)\mapsto (x^3,y)$ ($m=2,n=2$),
$(x,y)\mapsto (x^3+xy,y)$ ($m=n=2$). An essential difference between
the latter two is that the last one is {\em stable} (it is called
cusp singularity), the other one is not (stable representatives will
play an important role in Section~\ref{interpolation}).

As we already noted in Remark \ref{remA}, singularity submanifolds
can stratify the source space of a map $f:M^m\to N^n$ between
manifolds. We want to study a finer stratification though---one
which corresponds to multisingularities.

\subsection{Multisingularities}\label{multi}

Consider contact singularities $\alpha_i\subset \E^0(m,n)$, $m>n$.

\begin{definition} \label{multisingularity}
A {\em multisingularity} $\underline{\alpha}$ is a multi-set of
singularities $(\alpha_1, \ldots, \alpha_r)$  together with a
distinguished element, denoted $\alpha_1$.
\end{definition}
\noindent For reasons explained in Section \ref{admis}, we define
the codimension of a multisingularity $(\alpha_1, \ldots, \alpha_r)$
by
\begin{equation}\label{codimmulti}
\codim \alphabar=(r-1)\ell+\sum \codim \alpha_i. \end{equation}
Hence the codimension does not depend on the order of the
monosingularities. The list of multisingularities of small
codimension (when $\ell$ is large) is given in the following table.

\smallskip

\begin{center}\begin{tabular}{l || c || c | c || c | c | c | c || c | c| c| c| c|}
$\codim$ & 0 & $\ell$ & $\ell$+1 & 2$\ell$ & 2$\ell$+1 & 2$\ell$+2 &
2$\ell+4$ & 3$\ell$ & $3\ell+1$ & $3\ell+2$ & $3\ell+3$ & $3\ell+4$
\\ \hline\hline
$\Sigma^0$ & $A_0$ & $A_0^2$ & & $A_0^3$ & & & & $A_0^4$ & & & &\\
\hline
$\Sigma^1$ & & & $A_1$ &  & $A_1A_0$ & $A_2$& & & $A_1A_0^2$& $A_2A_0$& $A_3$& \\
 & & &  &  &  & & & &  &$A_1^2$ & &\\
\hline $\Sigma^2$ & & & & & & & $III_{2,2}$ & & & & & $III_{2,2}A_0$ \\
  & & & & & & & & & & & & $I_{2,2}$
\\ \hline
\end{tabular}\end{center}

\noindent Here we used the notation $\alpha_1\alpha_2\ldots$ for the multiset
$(\alpha_1,\alpha_2,\ldots)$, and any of its permutations.

\begin{definition}
\label{multilocussource} Let $f:M^m \to N^n$ be a holomorphic map of
complex manifolds, and $\alphabar = (\alpha_1, \alpha_2, \ldots,
\alpha_r)$ a multisingularity. We define the following
multisingularity loci in $M$ and $N$
$$M_{\underline{\alpha}} =
\{x_1 \in M | f(x_1) \text{ has exactly } r \text{ pre-images } x_1,
\ldots, x_r, \text{ and } f \text{ has singularity } \alpha_i \text{
at } x_i\},$$ and $N_{\alphabar}= f(M_{\alphabar}).$
\end{definition}
If we permute the monosingularities in $\alphabar$, i.e. choose
another singularity to be $\alpha_1$, then $M_{\alphabar}$ changes,
while $N_{\alphabar}$ does not.

\subsection{Admissible maps} \label{admis} The main point of the present paper is
to study certain identities among cohomology classes represented by
multisingularity loci. We can only expect such identities if the map
satisfies certain transversality conditions. We will define these
maps in the present section, and call them `admissible'.

As before, the codimension of the singularity $\eta$ in
$\E^0(m,n)$ is denoted by $\codim \eta$. It is reasonable to
expect that for a `nice enough' map $f:M\to N$, the codimension of
$\eta(f)$ in $M$ is the same (see Remark \ref{remA}). Indeed, for
a map $f:M^m\to N^n$ one can consider the bundle
$$\{\text{germs } (M,x)\to (N,f(x))\}\to \{(x,f(x))|x\in M\}$$ together with the section
$(x,f(x)) \mapsto $ the germ of $f$ at $x$. (Precisely speaking, one
should consider jet approximations to have a finite rank bundle.)
The fibers of this bundle are identified with $\E^0(m,n)$, so we can
consider $\eta$ in each. Thus we obtain a submanifold of codimension
$\codim \eta$ in the total space. The set $\eta(f)$ is the preimage
of this submanifold along the section. Hence, for transversal
sections the codimension of $\eta(f)$ in $M$ is $\codim \eta$. An
`admissible-for-monosingularities' map must have this transversality
property. If we worked over the real numbers we would have the
transversality theorem guaranteeing that almost all maps are
admissible-for-monosingularities.

We need, however, the admissibility property for multisingularities
as well. This more sophisticated notion uses the classifying space
of multisingularities, see \cite{rrsz98}, \cite{kazamulti} as
follows. A map $f:M\to N$ induces a map $k_f$ from $N$ to a space
$X$ called the {\em classifying space of multisingularities}. The
infinite dimensional space $X$ has a finite codimensional
submanifold $X_{\alphabar}$ corresponding to the multisingularity
$\alphabar$. The set $N_{\alphabar}$ is the preimage of
$X_{\alphabar}$ along the map $k_f$. The map $f$ is defined to be
admissible if $k_f$ is transversal to $X_{\alphabar}$ for all
$\alphabar$. Over the real numbers almost all maps are admissible.

\begin{remark} Another way to define admissibility is to require
that a natural section of the ``multijet bundle'' is transversal to
certain submanifolds in the total space, as in the Multijet
Transversality Theorem, see \cite[Thm. 4.13]{gg}. Either way,
admissible maps are admissible-for-monosingularities, together with
the property that the closures of the $f$-images of the
monosingularity submanifolds $\alpha(f)$ satisfy certain
transversality properties. In Figure 1 both maps are
admissible-for-monosingula\-ri\-ties, but only the first map is
admissible for multisingularities.
\end{remark}

\begin{figure}
\epsffile{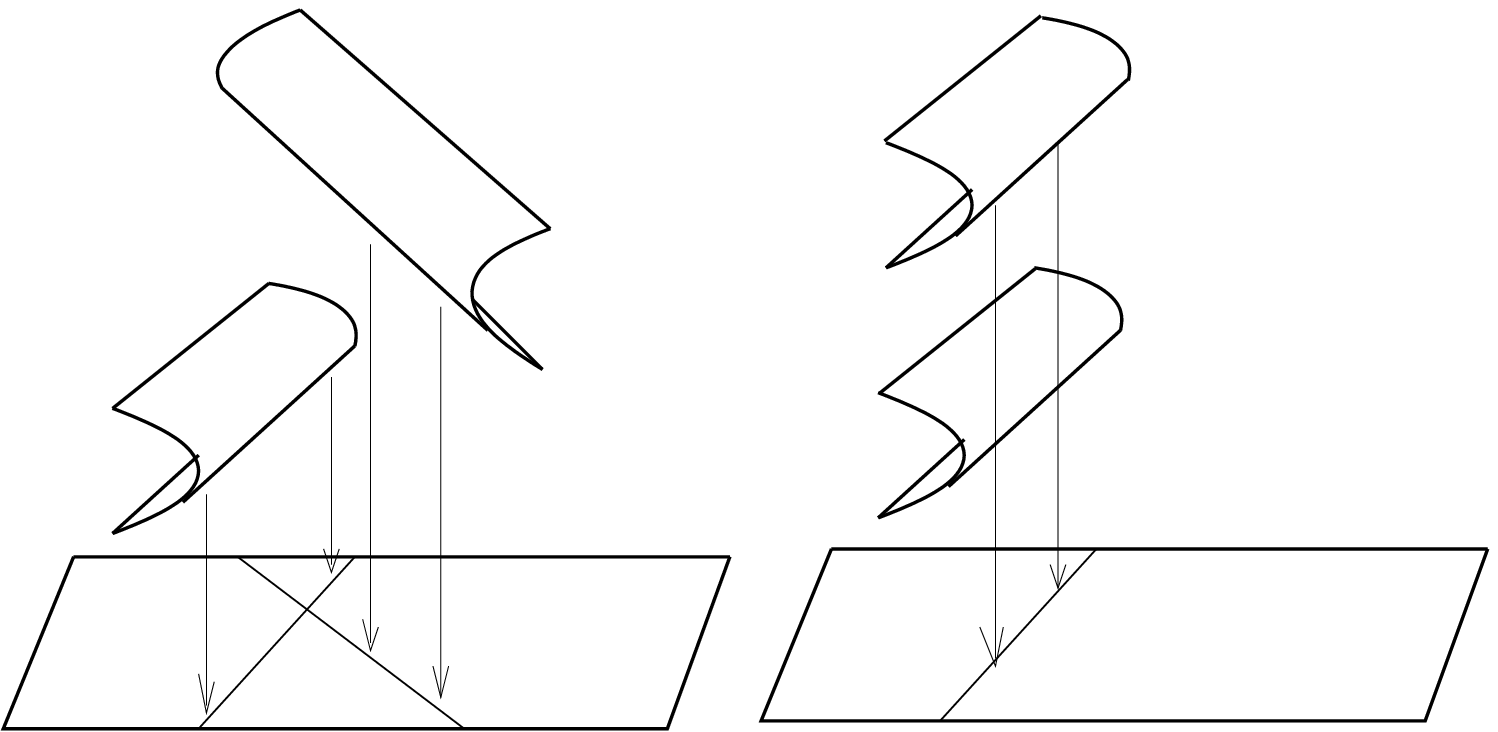} \caption{ } \label{figure1}
\end{figure}

The codimension of $X_{\alphabar}$ in $X$ is $r\ell+\sum \codim
\alpha_i$. Therefore, for an admissible map, the
codimension of $N_{\alphabar}$ in $N$ is $r\ell+\sum \codim
\alpha_i$;
 and the codimension of $M_{\alphabar}$ in $M$ is
$(r-1)\ell+\sum \codim \alpha_i$, (\textit{cf.} formula
(\ref{codimmulti})).  It also follows that for
admissible maps, the closures of the loci $M_{\alphabar}$ and
$N_{\alphabar}$ support fundamental homology classes. We will call
the Poincar\'e duals of these classes the cohomology classes
represented by the $\alphabar$ multisingularity loci in the source and
target manifolds.

\subsection{Cohomology classes represented by multisingularity submanifolds}
Let $f:$ $M^m$ $\to$ $N^n$ be an admissible map. Denote by
$\overline{m}_{\alphabar} = \left[\bar{M _{\alphabar}}\right] \in
H^{\codim \alphabar}(M)$ the cohomology class represented by the
closure of the $\alpha$-multisingularity locus in the source, and
$\overline{n}_{\alphabar} = \left[\bar{N _{\alphabar}}\right] \in
H^{\ell+\codim \alphabar}(N)$, in the target. Since it is often of
use to consider these classes $\overline{m}_{\alphabar}$,
$\overline{n}_{\alphabar}$ with their natural multiplicities, we let
 \begin{center} $m_{\alphabar} =
\#Aut(\alpha_2, \ldots, \alpha_r)\overline{m}_{\alphabar}$,\qquad
$n_{\alphabar} = \#Aut(\alpha_1, \alpha_2, \ldots,
\alpha_r)\overline{n}_{\alphabar}$, \end{center} where
$\#Aut(\alpha_1, \alpha_2, \ldots, \alpha_r) = \#Aut(\alphabar)$ is
the number of permutations $\sigma \in \mathfrak{S}_r$ such that
$\alpha_{\sigma(i)}=\alpha_i$ for all $i$ from 1 to $r$. So if
$\alphabar$ contains $k_1$ singularities of type $\alpha_1$, $k_2$
of type $\alpha_2$, etc., then $\#Aut(\alphabar)= k_1!k_2!\ldots$.

The degree of the restriction map $f:M_{\alphabar} \rightarrow
N_{\alphabar}$ is the number of $\alpha_1$ singularities in
$\alphabar$, hence we have the following relation
\begin{equation}
\label{nalpha}
 n_{\alphabar} = f_!(m_{\alphabar}),
\end{equation}
where $f_!$ is the Gysin homomorphism.

\subsection{Classes of multisingularity loci in terms of
characteristic classes}

The virtual normal bundle $\nu(f)$ of a map $f:M^m\to N^n$ is the
formal difference $f^*(TN)-TM$ of bundles over $M$. This is an actual bundle
if $f$ is an immersion. The total Chern class of a map is defined to be the total
Chern class of its virtual normal bundle, $$c(f)=c(f^*(TN)-TM) =
\frac{c(f^*(TN))}{c(TM)}= \frac{f^*(c(N))}{c(M)}.$$

A classical theorem of Thom \cite{thom56} is that {\em
mono}singularity loci in the source can be expressed as a polynomial
(the Thom polynomial) of the Chern classes of the map. The
generalization for multisingularity loci was found by Kazarian.

\begin{theorem}[Kazarian \cite{kazamulti}] For multisingularities
$\alphabar = (\alpha_1,\ldots , \alpha_r)$ and $J \subset \{1, \ldots, r\}$, let
$\bar{J} = \{1, \ldots, r\} \setminus J$. There exist unique
polynomials $R_{\alphabar}$ in the Chern classes of the virtual
normal bundle $\nu(f)$, called {\em residue (or residual)
polynomials}, satisfying
\begin{equation}
\label{multiform} m_{\alphabar} = R_{\alphabar} + \sum_{1\in J
\subsetneq \{1, \ldots ,r\}} R_{\alphabar_{J}}
f^*(n_{\alphabar_{\overline{J}}})
\end{equation}
for admissible maps. Here the sum is taken over all possible subsets
of $\{1, \ldots, r \}$ containing 1. Moreover the residue
polynomials are independent of the order of the monosingularities
$\alpha_i$ in $\alphabar$.
\end{theorem}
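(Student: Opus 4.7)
The plan is to construct the residue polynomials inductively on $r = |\alphabar|$, use Kazarian's classifying space of multisingularities to establish universality, and then check symmetry separately.

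First I would set up the geometric framework. An admissible map $f: M^m \to N^n$ induces a classifying map $k_f: N \to X$, where $X$ is the (infinite dimensional) classifying space of multisingularities of Sz\H{u}cs and Rim\'anyi \cite{rrsz98}. The stratum $X_{\alphabar} \subset X$ has codimension $r\ell + \sum \codim \alpha_i$, and by admissibility $N_{\alphabar} = k_f^{-1}(X_{\alphabar})$ transversally, so $\overline{n}_{\alphabar}$ is the pullback of $[\overline{X_{\alphabar}}]$. There is an analogous ``source'' version $\tilde X_{\alphabar}$ (parametrizing a distinguished preimage together with the multisingularity data at its image) fitting into a diagram over $X_{\alphabar}$, and $\overline{m}_{\alphabar}$ is the corresponding pullback. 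The point is that every class $\overline{m}_{\alphabar}$ and $f^*\overline{n}_{\alphabar}$ is the pullback of a universal class on $M$ along the classifying map, whose coefficient ring is generated (modulo the strata) by Chern classes of $\nu(f)$.

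Next I would define the polynomials by induction on $r$. For $r=1$, Thom's theorem \cite{thom56} gives $R_{(\alpha_1)} = m_{\alpha_1}$ as a polynomial in $c(\nu(f))$, depending only on the singularity type. For $r>1$, assuming the claim for all shorter multisingularities, define
\begin{equation*}
R_{\alphabar} := m_{\alphabar} - \sum_{1\in J \subsetneq \{1, \ldots ,r\}} R_{\alphabar_{J}}\, f^*\bigl(n_{\alphabar_{\overline{J}}}\bigr).
\end{equation*}
What must be proved is that this quantity is indeed a universal polynomial in the Chern classes of $\nu(f)$, and not some genuinely new cohomological datum on $M$. I would argue this by stratifying $M_{\alphabar}$: if one drops the requirement that the distant preimages $\{x_i\}_{i\in \bar J}$ exist, the class $m_{\alphabar_J} \cdot f^*(n_{\alphabar_{\bar J}})$ (modulo contributions already inductively expressed) records exactly the ``long distance'' part of the cycle, while the truly local contribution at $x_1$ is governed by the restriction of the universal class $[\tilde X_{\alphabar}]$ to the stratum. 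This local contribution lives in the Chern subring by a Thom isomorphism / equivariant cohomology argument for the normal bundle of the stratum, since the stabilizer of a multisingularity acts on the normal bundle through linear changes of source and target coordinates, and its invariants are generated by Chern classes. Uniqueness follows at once because the recursion solves for $R_{\alphabar}$ from $m_{\alphabar}$ and strictly smaller data.

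The main obstacle is the universality claim, i.e.\ extracting the pure ``Chern class polynomial'' piece from $R_{\alphabar}$. To finish, I would address the symmetry statement: the recursion is defined using $\alpha_1$ as the distinguished element, but the classes $f^*(n_{\alphabar_{\bar J}})$ are insensitive to which element of $\bar J$ we call distinguished, so switching the distinguished element within $J$ only affects $R_{\alphabar_J}$, and a short induction shows the answer is the same. Finally, I would note that a careful application of the multijet transversality theorem (Remark following Figure 1) guarantees the existence of enough admissible maps to detect equalities of universal polynomials, so agreement for one representative admissible map forces agreement in general.
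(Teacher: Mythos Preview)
The paper does not prove this theorem; it is quoted from Kazarian \cite{kazamulti} and used as a black box throughout. So there is no ``paper's own proof'' to compare against.

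That said, your sketch has a real gap at exactly the point you flag as ``the main obstacle.'' You argue that the local contribution lands in the Chern subring because ``the stabilizer of a multisingularity acts on the normal bundle through linear changes of source and target coordinates, and its invariants are generated by Chern classes.'' This is the argument that works for \emph{mono}singularities, but it fails for multisingularities. The paper itself points this out explicitly in Section~\ref{known}: the cohomology ring of the classifying space of multisingularities is \emph{not} a ring of characteristic classes---it contains Landweber--Novikov classes, i.e.\ classes of the form $f^*f_!(\text{Chern monomial})$, which are genuinely new data not expressible as polynomials in $c_i(\nu_f)$. So the universal class you pull back does not a priori lie in the Chern subring, and your ``Thom isomorphism / equivariant cohomology'' step does not force it to. The content of Kazarian's theorem is precisely that after subtracting off the inductive terms $R_{\alphabar_J} f^*(n_{\alphabar_{\bar J}})$, the Landweber--Novikov contributions cancel and only a Chern polynomial remains; establishing this cancellation requires a more careful analysis of the stratification of the classifying space than what you have written.

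Your symmetry argument is also incomplete: you say ``a short induction shows the answer is the same,'' but since $m_{\alphabar}$ itself depends on which element is distinguished, the independence of $R_{\alphabar}$ from the ordering is not formal and needs the geometry.
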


In particular, if $\alphabar = (\alpha)$ is a monosingularity, then
(\ref{multiform}) yields $m_{\alpha} = R_{\alpha}$, hence
$R_{\alpha}$ is the Thom polynomial $\tp_\alpha$ of the given
singularity. For example, for $n=m$ we have $R_{A_2}=c_1^2+c_2$; and
this means that the cohomology class represented by points in $M$
where the map has singularity $A_2$, is equal to $c_1^2+c_2$ of the
virtual normal bundle of the map.

Observe that in the last example we did not specify $m$ and $n$,
only their difference. This is a classical fact about Thom
polynomials: the Thom polynomial of singularities having the same
local algebra and the same relative dimension $\ell$ (but maybe
living in different vector spaces $\E^0(m,n)$) are the same.

\smallskip

We can set $S_{\alphabar} = f_!(R_{\alphabar})$, and putting
(\ref{multiform}) together with (\ref{nalpha}) and the adjunction
formula for the Gysin map yields
\begin{equation}
\label{targetform} n_{\alphabar} = S_{\alphabar} + \sum_{1\in J
\subsetneq \{1, \ldots ,r\}} S_{\alphabar_{J}}
n_{\alphabar_{\overline{J}}}.
\end{equation}

\section{Calculation of $R_{\alphabar}$}
\label{clalcofR}

Different calculational techniques for residue polynomials of
monosingularities, that is, Thom polynomials of contact
singularities has been studied for decades. One of the most
effective techniques, which also generalizes to residue polynomials
of multisingularities was invented by the second author. We will call it
the interpolation method, and summarize it below. For more details
and proofs see \cite{rrtp}, \cite{lfrr04}

\subsection{Interpolation}
\label{interpolation}

Let $\xi$ be a contact singularity, and let us choose a {\em stable}
representative $\xi'\in \E^0(m,n)$. Stability of germs is discussed
e.g. in \cite{agvl}. In later sections we will not distinguish $\xi$
from $\xi'$, and call both by the same name $\xi$.

One of the main ideas of \cite{rrtp} is---roughly speaking---that we
can pretend that $\xi'$ is a map. Then stability of the germ implies
that as a map it is admissible, hence formulas (\ref{multiform}) and
(\ref{targetform}) hold for it. However, the source and target
spaces of $\xi'$ are (germs of) vector spaces, their cohomology ring
is trivial, so the formulas are meaningless. The idea is that we
consider formulas (\ref{multiform}) and (\ref{targetform}) for
$\xi'$ in equivariant cohomology. For this we need a group action.

A pair $(\phi,\psi)$ is a symmetry of the germ $\xi'$, if $\phi$
(resp. $\psi$) is an invertible element in $\E^0(m,m)$ (resp.
$\E^0(n,n)$), and $$\xi'=\psi\circ\xi'\circ \phi^{-1}.$$ If $G$ is a
group of symmetries of $\xi'$, then all ingredients of formulas
(\ref{multiform}) and (\ref{targetform}) make sense in the
$G$-equivariant cohomology ring of $\C^m$ (resp. $\C^n$). The
equivariant cohomology of a vector space is the same as the
equivariant cohomology of the one point space, i.e. the ring of the
$G$-characteristic classes $H^*BG$. Observe also, that the map
${\xi'}^*: H^*_G(\C^n) \to H^*_G(\C^m)$ is the identity map of
$H^*BG$.

The fact that formulas (\ref{multiform}) and (\ref{targetform}) hold
for stable singularities in equivariant cohomology put strong
constraints on the residue polynomials.

\begin{example}\label{whitney}
\rm Consider the stable germ $\xi:(x,y)\mapsto (x^2,xy,y)$, called
Whitney umbrella. The group $G=U(1)\times U(1)$ is a group of
symmetries of $\xi$ with the representations
$$(\alpha,\beta)\cdot (x,y)=(\alpha x,\beta\bar{\alpha} y),\qquad
(\alpha,\beta)\cdot (u,v,w)=(\alpha^2 u, \beta v, \beta\bar{\alpha}
w), \qquad\qquad \big((\alpha,\beta)\in U(1)\times U(1)\big)$$ on
the source and target spaces respectively. Indeed,
$$\xi(\ (\alpha,\beta)\cdot (x,y)\ ) = (\alpha,\beta)\cdot
\xi(x,y).$$ We also use $\alpha$ and $\beta$ for the first Chern
classes of the two factors of $U(1)\times U(1)$. Then
$H^*BG=\Q[\alpha,\beta]$, and we have that
$$c(\xi)=\frac{(1+2\alpha)(1+\beta)(1+\beta-\alpha)}{(1+\alpha)(1+\beta-\alpha)}=1+(\beta+\alpha)+(\alpha\beta-\alpha^2)+(-\alpha^2\beta+\alpha^3)+\ldots.$$
The closure of the double point locus (in the source space) of the
map $\xi$ is $\{y=0\}$. Its cohomology class is therefore
$\beta-\alpha$, the equivariant Euler class of its normal bundle.
The cohomology class represented by the image of this map can be
calculated to be $2\beta$ (see lemma \ref{adjlemma} below). The
pullback map $\xi^*$ is an isomorphism (as for all germs), hence the
pullback of the cohomology class of the image of $\xi$ is $2\beta$.
One of Kazarian's formulas (\ref{multiform}) (for maps from 2
dimensions to 3 dimensions) states that the difference of these two
multisingularity classes is $R_{A_0^2}$. Hence we get that
$$(\beta-\alpha)-2\beta=R_{A_0^2}(c_1=\beta+\alpha, c_2=\alpha\beta-\alpha^2, \ldots)\in \Q[a,b].$$
This has only one solution for $R_{A_0^2}$, namely $R_{A_0^2}=-c_1$.
\end{example}

Conditions obtained from stable singularities often determine
uniquely the residue polynomials, as follows. Let $\alpha$ be a
multisingularity of codimension $d$, and suppose that there are only
finitely many monosingularities $\xi$ with codimension $\leq d$. For
each $\xi$ we can consider the maximal compact symmetry group
$G_\xi$ (see \cite{rl}) of a stable representative. It is explained
in \cite{lfrr07} that $G_\xi$ acts on the normal bundle of
$\xi\subset \E^0(m,n)$. Then, we have the following theorem.

\begin{theorem}\cite{lfrr07}\label{equations_are_enough}
Suppose the $G_\xi$-equivariant Euler class of the normal bundle of
the embedding $\xi\subset \E^0(m,n)$ is not a 0-divisor for all the
finitely many singularities $\xi$ with $\codim \xi\leq d$. If
formula (\ref{multiform}) holds for stable representatives of all
the finitely many $\xi$ with codimension $\leq d$ (in $G_\xi$
equivariant cohomology), then formula  (\ref{multiform}) holds for
all admissible maps.
\end{theorem}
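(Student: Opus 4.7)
The approach combines Kazarian's uniqueness of the residue polynomials with an inductive argument that the equivariant equations at stable representatives admit at most one polynomial solution. By Kazarian's theorem there exist unique polynomials $R_{\alphabar}$ satisfying (\ref{multiform}) for all admissible maps; these automatically satisfy the $G_\xi$-equivariant version of (\ref{multiform}) at every stable $\xi$, by naturality of the multisingularity apparatus (near a $\xi$-point of an admissible $f$, the local geometry is, up to jet approximation, $G_\xi$-equivariantly modeled on the stable germ $\xi$ itself). Hence the candidate polynomials hypothesized in the theorem, which also satisfy these equivariant equations, must agree with Kazarian's $R_{\alphabar}$ provided we can prove uniqueness for the equivariant system. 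This reduces the theorem to an interpolation-type uniqueness claim.

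\textbf{Inductive uniqueness.} I would induct on $d_0=\codim\alphabar$, the base case being trivial. Fix $\alphabar$ of codimension $d_0$ and suppose all $R_{\alphabar'}$ with $\codim\alphabar'<d_0$ are already pinned down. Using that $\xi^*$ is the identity on $H^*(BG_\xi)$ (as noted in Section~\ref{interpolation}), the equivariant form of (\ref{multiform}) at a stable $\xi$ with $\codim\xi\leq d_0$ reads
\[
m_{\alphabar}(\xi)=R_{\alphabar}(c(\xi))+\sum_{1\in J\subsetneq\{1,\ldots,r\}}R_{\alphabar_J}(c(\xi))\cdot n_{\alphabar_{\bar J}}(\xi),
\]
and every summand on the right is determined by the inductive hypothesis since $\codim\alphabar_J<d_0$. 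Thus each stable $\xi$ produces one equivariant constraint on the specialization $R_{\alphabar}(c(\xi))\in H^*(BG_\xi)$. When $\codim\xi<d_0$, the left side typically vanishes since no $\alphabar$-multisingularity can occur in the $\xi$-stratum, yielding a homogeneous constraint on the polynomial coefficients. When $\xi$ has codimension $d_0$ and $\alpha_1$-type, $m_{\alphabar}(\xi)$ is geometrically given by a pushforward from the origin involving the equivariant Euler class $e_\xi$ of the normal bundle of $\xi\subset\E^0(m,n)$; the non-zero-divisibility of $e_\xi$ hypothesized in the theorem is precisely the condition that this equation has a unique solution for $R_{\alphabar}(c(\xi))$ rather than one defined only modulo the annihilator of $e_\xi$. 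Collecting constraints from the finitely many $\xi$ of codimension $\leq d_0$ produces a linear system on the coefficients of $R_{\alphabar}$ whose solvability, guaranteed by the Euler class hypothesis, uniquely determines $R_{\alphabar}$.

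\textbf{Main obstacle.} The delicate step is extracting the clean factorization through $e_\xi$ in the top-codimension equation and verifying joint invertibility of the linear system across all $\xi$. One must show that the equivariant Chern classes $c(\xi)$ of the finitely many stable germs of codimension $\leq d_0$ collectively detect every polynomial of degree $d_0$ in the universal Chern classes, and that at each step the unique solution produced is polynomial-valued (not merely a class in some $H^*(BG_\xi)$). This requires a careful normal-bundle analysis of the stratification of $\E^0(m,n)$ and of the classifying space of multisingularities at each of the finitely many stable representatives, identifying how multisingularity loci of the ambient map pull back to Euler-class factors on the equivariant slice; this is the technical core developed in \cite{lfrr07} and the reason for stating the result in that form.
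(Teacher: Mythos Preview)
Your approach is essentially the paper's: reduce the theorem to injectivity (in degrees $\leq d$) of the evaluation map $\Q[c_1,c_2,\ldots]\to\bigoplus_\xi H^*(BG_\xi)$, the sum running over the finitely many singularities $\xi$ with $\codim\xi\leq d$. The paper simply cites this injectivity as the content of \cite{lfrr07} and observes that the extension from monosingularities to multisingularities is immediate, exactly via your inductive reduction on $\codim\alphabar$.

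One correction worth flagging: your account of where the non-zero-divisor hypothesis on $e_\xi$ enters is misplaced. At a single stable germ $\xi$, the equivariant equation already determines $R_{\alphabar}(c(\xi))\in H^*(BG_\xi)$ outright---there is no ambiguity ``modulo the annihilator of $e_\xi$'' to resolve, since the equation is simply $R_{\alphabar}(c(\xi))=m_{\alphabar}(\xi)-(\text{known terms})$. The Euler class condition is used instead in \cite{lfrr07} to prove the \emph{joint} injectivity of the evaluation map, via a localization/Gysin argument on the stratification of $\E^0(m,n)$: a polynomial of degree $\leq d$ that restricts to zero on every stratum of codimension $\leq d$ must vanish, and the non-zero-divisor normal Euler classes are what make the inductive passage from stratum to stratum go through. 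Your ``Main obstacle'' paragraph correctly locates this as the technical heart, so the slip is expository rather than structural.
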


Strictly speaking this theorem is proved in \cite{lfrr07} only for
monosingularities (since that was the object of the paper). However,
what is proved there, is that the map
$$\Q[c_1,c_2,\ldots]\to \oplus H^*(BG_\xi),$$
whose component functions are the evaluations of Chern classes at
the stable representatives of the $\xi$'s with codim $\leq d$, is
injective in degrees $\leq d$. This implies the result for
multisingularities as well.

Mather \cite{mather6} determined the codimensions in which moduli of
singularities occur: for large $\ell$ moduli occurs in codimension
$6\ell+9$. Calculations show that the condition in the theorem on
the Euler classes of the monosingularities of codimension $\leq
6\ell+8$ also hold.

\subsection{A sample Thom polynomial calculation.}
\label{TpA1} We will show how Theorem \ref{equations_are_enough} can
be used to find the Thom polynomial of $A_1$ (a classical result,
due to Giambelli, Whitney, Thom in various disguises). We will carry
out the calculation for general $\ell$.

The codimension of the $A_1$ singularity is $\ell+1$, hence
$\tp_{A_1}$ is a degree $\ell+1$ polynomial, such that
\begin{equation}
\label{Tpunknown} [\overline{A_1(f)}]= \tp_{A_1}(c(f))
\end{equation}
for any admissible map $f$. There are only two singularities with
codimension $\leq \ell+1$, namely: $A_0$ and $A_1$. Hence from
Theorem~\ref{equations_are_enough} we can deduce two constraints on
the $\tp_{A_1}$. It turns out that the constraint coming from $A_0$
is redundant, hence we will now consider the constraint coming from
$A_1$ itself. For this we need to choose a stable representative of
the singularity $A_1$. The general procedure of finding a stable
representative of a singularity given by its local algebra is called
``universal unfolding''. For $A_1$ we obtain the following germ
$\C^{\ell+1},0 \to \C^{2\ell+1},0$:
$$f: (x,y_1,\ldots,y_\ell)\mapsto
(x^2,xy_1,\ldots,xy_{\ell},y_1,\ldots,y_\ell).$$ The general
procedure to find the maximal compact symmetry group is described in
 \cite{rl}. For our germ we obtain $G_f=U(1)\times
U(\ell)$ with the representations
$$\rho_1 \oplus (\overline{\rho_1} \otimes \rho_\ell),
\qquad \rho_1^2\oplus \rho_\ell \oplus(\overline{\rho_1} \otimes
\rho_\ell)$$ on the source and target spaces, where $\rho_1$ and
$\rho_\ell$ are the standard representations of $U(1)$ and
$U(\ell)$. It is easier to understand the representations of the maximal
torus $U(1)\times U(1)^\ell$, so we proceed as follows. Let
$(\alpha,\beta_1,\ldots,\beta_\ell)\in U(1)\times U(1)^\ell$. The
diagonal actions given by
$$(\alpha,\bar{\alpha}\beta_1,\ldots,\bar{\alpha}\beta_\ell),\qquad\text{and}\qquad
(\alpha^2,\beta_1,\ldots,\beta_\ell,\bar{\alpha}\beta_1,\ldots,\bar{\alpha}\beta_\ell)$$
is clearly a symmetry of the germ above.

Hence, when we apply formula (\ref{Tpunknown}) to the germ $f$, we
obtain an equation in $H^*(B(U(1)\times U(\ell)))$. By abuse of
language we denote the Chern roots of $U(1)$ and $U(\ell)$ by
$\alpha$ and $\beta_1,\ldots,\beta_\ell$. Let $b_i$ be the $i$'th
elementary symmetric polynomial of the $\beta_i$'s, that is the
universal Chern classes of the group $U(\ell)$. Then the total Chern
class of $f$ is
$$c(f)=\frac{(1+2\alpha)\prod^\ell (1+\beta_i) \prod^\ell
(1+\beta_i-\alpha)}{(1+\alpha)\prod^\ell (1+\beta_i-\alpha)}=
\frac{(1+2\alpha)\prod^\ell (1+\beta_i) }{(1+\alpha)}=$$
$$=1+(b_1+\alpha)+(b_2+b_1\alpha-\alpha^2)+(b_3+b_2\alpha-b_1\alpha^2+\alpha^3)+\ldots,$$
that is, $c_1(f)=b_1+\alpha$, $c_2(f)=b_2+b_1\alpha-\alpha^2$, etc.

Now we need the left hand side of formula (\ref{Tpunknown}) for our
germ $f$. The $A_1$ locus of the germ $f$ is only the origin, hence
$[A_1(f)]$ is the class represented by the origin. By definition the
class represented by the origin in the equivariant cohomology of a
vector space is the Euler class (a.k.a. top Chern class) of the
representation. In our case it is
$$\alpha\prod^{\ell}(\beta_i-\alpha).$$
Hence formula (\ref{Tpunknown}) reduces to
$$\alpha\prod^{\ell}(\beta_i-\alpha)=\tp_{A_1}(c_1=b_1+\alpha,
c_2=b_2+b_1\alpha-\alpha^2, \ldots).$$ It is simple algebra to show
that the polynomials $b_1+\alpha, b_2+b_1\alpha-\alpha^2, \ldots$
(up to the degree $\ell+1$ one) are algebraically independent in
$\Q[\alpha,b_1,b_2,\ldots,b_{\ell}]$, and that
$c_{\ell+1}=\alpha\prod^{\ell}(\beta_i-\alpha)$. This yields that
$\tp_{A_1}=c_{\ell+1}.$

\begin{remark} \label{genotype}
The ingredients of Kazarian's formulas (\ref{multiform}) are certain
geometrically defined classes ($m_\alpha$, $n_\alpha$), as well as
the Chern classes of the map. When applying these formulas for
stable representatives of monosingularities, there is an essential
simplification concerning only the Chern classes. The stable
representatives are universal unfoldings of so-called {\em
genotypes} of the singularity. The fact is that the genotype has the
same symmetry group as its universal unfolding; moreover, the Chern
classes of the genotype are the same as the Chern classes of its
universal unfolding, see \cite{rrtp}. Hence, later in the paper, if
we only need the Chern classes of a stable representative, we may
work with the genotype instead.
\end{remark}

\begin{remark} \label{torus} The ring of characteristic classes of a
group $G$ embeds into the ring of characteristic classes of its
maximal torus $T$. Hence the information that a formula holds in
$H^*(BG)$ is the same as that it holds in $H^*(BT)$. In what follows
we will always use the one more convenient for our notation.
\end{remark}

\section{The known general residue polynomials} \label{known}

Infinitely many Thom polynomials can be named at the same time, due
to certain stabilization properties that they satisfy. In the present
paper we will be concerned with two of the stabilizations. The first
we already mentioned, namely that the Thom polynomial only depends
on $\ell$, not on $m$ and $n$ (for the same local algebra). The
second---Theorem~\ref{thomseries} below---concerns the Thom polynomial as
$\ell$ varies (while not changing the local algebra). To phrase Theorem
\ref{thomseries} we need some notions.

Let $Q$ be a local algebra of a singularity. In singularity theory
one considers three integer invariants of $Q$ as follows: (i)
$\delta=\delta(Q)$ is the complex dimension of $Q$, (ii) the {\em
defect} $d=d(Q)$ of $Q$ is defined to be the minimal value of $b-a$
if $Q$ can be presented with $a$ generators and $b$ relations; (iii)
the definition of the third invariant $\gamma(Q)$ is more subtle,
see \cite[\S6]{mather6}. The existence of a {\em stable} singularity
$(\C^m,0)\to (\C^n,0)$ with local algebra $Q$ is equivalent to the
conditions $\ell\geq d$, $\ell(\delta-1)+\gamma\leq m$. Under these
conditions the codimension of the contact singularity with local
algebra $Q$ in $\E^0(m,n)$ is $\ell(\delta-1)+\gamma$.

\begin{theorem} \cite{lfrr07} \label{thomseries} Let $Q$ be a local algebra of singularities.
Assume that the normal Euler classes of the singularities in
$\E^0(m,n)$ with local algebra $Q$ are not 0. Then associated with
$Q$ there is a formal power series ({\em Thom series}) $\ts_Q$ in
the variables $\{d_i|i\in \Z\}$, of degree $\gamma(Q)-\delta(Q)+1$,
such that all of its terms have $\delta(Q)-1$ factors, and the Thom
polynomial of $\eta\subset \E^0(m,n)$ with local algebra $Q$ is
obtained by the substitution $d_i=c_{i+(m-n+1)}$. \qed
\end{theorem}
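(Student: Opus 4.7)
My plan is to reduce the theorem to the interpolation characterization provided by Theorem \ref{equations_are_enough}, by carefully tracking how stable representatives and their Chern classes evolve as the relative dimension $\ell$ varies while the local algebra $Q$ is held fixed.

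First, I would construct a compatible family of stable germs. Fix a genotype $g$ of $Q$ (cf.\ Remark \ref{genotype}) and, for each admissible $\ell\geq d(Q)$, let $f_\ell:(\C^{m_\ell},0)\to (\C^{n_\ell},0)$ with $m_\ell=\ell(\delta-1)+\gamma$, $n_\ell=m_\ell+\ell$ be the universal unfolding of $g$ at level $\ell$. The key observation is that $f_{\ell+1}$ is obtained from $f_\ell$ by appending a prescribed block of extra source and target coordinates; consequently the maximal compact symmetry group $G_{\ell+1}$ contains an embedded copy of $G_\ell$ together with an additional $U(1)$-factor, and on the level of Chern classes of the virtual normal bundle, the restriction map sends $c_j(f_{\ell+1})$ to a polynomial in the $c_k(f_\ell)$ and the new $U(1)$-weight governed by a single index shift. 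This is precisely what will force the substitution $d_i=c_{i+(m-n+1)}$ to interpolate correctly between levels.

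Second, I would apply interpolation at each level. Using Theorem \ref{equations_are_enough}, the Thom polynomial $\tp_\eta^\ell$ for $\eta\subset\E^0(m_\ell,n_\ell)$ with local algebra $Q$ is uniquely determined by its evaluations at the stable representatives of all singularities $\xi$ with $\codim\xi\leq\codim\eta$. Re-expressing these constraints in the formal variables $d_i$ via $d_i=c_{i-\ell+1}$, the Chern-class stabilization from the first step shows that the constraint system at level $\ell+1$ agrees with that at level $\ell$ after the shift. Hence the polynomials $\{\tp_\eta^\ell\}_\ell$ assemble into a compatible inverse system in the $d_i$ variables and stabilize to a unique formal power series $\ts_Q$.

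Third, I would verify the structural claims. The requirement that each monomial of $\ts_Q$ carry exactly $\delta-1$ factors can be traced to the factorization of the equivariant Euler class of the normal bundle of $\eta\subset\E^0(m_\ell,n_\ell)$ as a product of $\delta-1$ independent linear expressions---one for each nontrivial quotient direction in $Q$---each of which pulls back to a monomial linear in some $d_i$ under the substitution. The degree statement $\gamma-\delta+1$ then emerges by matching cohomological degrees on both sides of the substitution, using that each $d_i$ contributes degree $i$ in $\ts_Q$ but degree $i-\ell+1$ after substitution. The main obstacle will be the explicit formulation and proof of the Chern-class stabilization in the first step: the groups $G_\ell$ do not embed into one another by a naive inclusion but through a coordinated enlargement of the source and target representations, so the bookkeeping linking Chern classes at successive levels, together with the verification that the resulting $\ts_Q$ is independent of the chosen genotype, is the technical heart of the argument.
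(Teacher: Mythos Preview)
The paper does not prove Theorem~\ref{thomseries}; it is quoted from \cite{lfrr07} and closed with a \qed\ immediately after the statement. There is therefore no ``paper's own proof'' to compare your proposal against.

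As for your sketch itself: the broad strategy of using interpolation (Theorem~\ref{equations_are_enough}) at each relative dimension and showing that the resulting constraint systems are compatible under an index shift is indeed the circle of ideas behind the result in \cite{lfrr07}. However, two points in your outline are not yet arguments. First, the stabilization of Chern classes that you describe in Step~1 is straightforward for the $A_r$ family (where the symmetry groups are $U(1)\times U(\ell)$ and one simply sets the last Chern root to zero, as the present paper does repeatedly), but for a general local algebra $Q$ the relationship between $G_\ell$ and $G_{\ell+1}$, and hence between the Chern classes of the corresponding unfoldings, is less naive than ``append a $U(1)$ factor''; you would need to make this precise. Second, your justification of the claim that every monomial of $\ts_Q$ has exactly $\delta(Q)-1$ factors---via a ``factorization of the equivariant Euler class of the normal bundle as a product of $\delta-1$ independent linear expressions''---is not correct as stated: that Euler class is a product of $\codim\eta=\ell(\delta-1)+\gamma$ weights, not $\delta-1$, and the passage from this to the monomial structure of the Thom series requires the full stability argument of \cite{lfrr07}, not just a count of factors. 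The degree bookkeeping you give in Step~3 is fine once those two issues are settled, but they are the substance of the theorem, not side remarks.
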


Even though there are powerful methods by now to compute individual
Thom polynomials (i.e.~finite initial sums of the $\ts$), finding
closed formulas for these Thom series remains a subtle problem. Here
are some examples.

\begin{description}
\item[\bf A$_0$] $Q=\C$ ({\sl embedding}). Here $\delta=1$, $\gamma=0$, and
\[\ts=1.\]
\item[\bf A$_1$] $Q=\C[x]/(x^2)$ (e.g. {\sl fold, Whitney umbrella}). Here $\delta=2$, $\gamma=$1, and
\[\ts=d_0.\]
\item[\bf A$_2$] $Q=\C[x]/(x^3)$ (e.g. {\sl cusp}). Here $\delta=3$, $\gamma=2$, and (see
\cite{rongaij})
\[\ts=d_0^2+d_{-1}d_1+2d_{-2}d_2+4d_{-3}d_3+8d_{-4}d_4+\ldots. \]
\item[\bf A$_3$] $Q=\C[x]/(x^4)$. Here $\delta=4$, $\gamma=3$, and (see \cite[Thm.4.2]{a3},
\cite{bsz06})
\[\ts=\sum_{i=0}^\infty 2^i
d_{-i}d_0d_i+\frac{1}{3}\sum_{i=1}^\infty \sum_{j=1}^\infty 2^i3^j
d_{-i}d_{-j}d_{i+j}+\frac{1}{2} \sum_{i=0}^\infty \sum_{j=0}^\infty
a_{i,j}d_{-i-j}d_id_j,\] where $a_{i,j}$ is defined by the formal
power series
\[\sum_{i,j}
a_{i,j}u^iv^j=\frac{u\frac{1-u}{1-3u}+v\frac{1-v}{1-3v}}{1-u-v}.\]
\end{description}

Although we used formal power series to describe Thom polynomials,
of course, the Thom polynomials themselves are polynomials, since
only finitely many terms survive for any concrete $\ell$. For
example from the Thom series of $A_2$ above it follows that for
$\ell=1$ the Thom polynomial is $c_1^2+c_2$, for $\ell=2$ the Thom
polynomial is $c_2^2+c_1c_3+2c_4$, etc.

\smallskip

There are other Thom series known in {\em iterated residue form}:
B\'erczi and Szenes found the  Thom series of $A_i$ singularities
for $i\leq 6$ \cite{bsz06}. In an upcoming paper \cite{lfrrpp} the
Thom series corresponding to several non-Morin singularities are
calculated. In \cite{balazs-laci} the Thom series of some second
order Thom-Boardman singularities are calculated.

\smallskip

However, all the mentioned results are Thom polynomials, that is
residue polynomials of monosingularities, rather than
multisingularities. Several individual multisingularity residue
polynomials are calculated for small $\ell$ in \cite{kazamulti} and
\cite{kaza}. However, the methods used there do not easily extend to
find formulas for all $\ell$. For example it was known that
\begin{eqnarray}
R_{A_0^4}= &  -6(c_1^3 + 3c_1c_2+2c_3) & \qquad \text{for } \ell=1, \\
R_{A_0^4}= & -6({c_{{2}}}^{3}+3\,c_{{1}}c_{{2}}c_{{3}}+7\,c_{{2}}c_{{4}}+2\,{c_{{1}}}^
{2}c_{{4}}+10\,c_{{1}}c_{{5}}+12\,c_{{6}}+{c_{{3}}}^{2}) &  \qquad \text{for } \ell=2,
\end{eqnarray}
but no $R_{A_0^4}$ formula was known for all $\ell$. In other words
the {\em residue series}, i.e. a formula containing $\ell$ as a
parameter is known only for a very few multisingularities. Here is a
complete list of those:

\begin{theorem} \label{a2th}\cite{rongamulti}
For admissible maps $f:M^m\to N^n$ we have
$$m_{A_0^2}=f^*(n_{A_0})-c_{\ell}(f).$$
That is, the residue polynomial of the multisingularity $A_0^2$ is
$-c_{\ell}$.
\end{theorem}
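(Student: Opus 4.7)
The plan is to apply the interpolation method of Section~\ref{interpolation}, leveraging Theorem~\ref{equations_are_enough} to reduce the identity to a single computation on a stable germ. First, I would write out Kazarian's formula (\ref{multiform}) for $\alphabar=(A_0,A_0)$. Since $\codim A_0^2 = \ell$ by (\ref{codimmulti}), the residue polynomial $R_{A_0^2}$ is homogeneous of degree $\ell$ in the Chern classes of the map. The only proper subset $J\subset\{1,2\}$ containing $1$ is $J=\{1\}$, which contributes $R_{A_0}\, f^*(n_{A_0}) = f^*(n_{A_0})$ because $R_{A_0}=\tp_{A_0}=1$. Hence (\ref{multiform}) becomes
$$m_{A_0^2} = R_{A_0^2}(c(f)) + f^*(n_{A_0}),$$
and the target is to show $R_{A_0^2} = -c_\ell$.

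Next, by Theorem~\ref{equations_are_enough}, this identity will hold universally provided it holds in equivariant cohomology for a stable representative of every monosingularity $\xi$ with $\codim \xi \leq \ell$. Since $\codim A_1 = \ell+1$, the only such $\xi$ is $A_0$. Its stable representative is the linear embedding
$$f:\C^m \hookrightarrow \C^n, \qquad (x_1,\ldots,x_m) \mapsto (x_1,\ldots,x_m,0,\ldots,0),$$
whose maximal compact symmetry torus $G=U(1)^m\times U(1)^\ell$ acts by $(\alpha_i,\beta_j)\cdot(x_i)=(\alpha_i x_i)$ on the source and $(\alpha_i,\beta_j)\cdot(u_i,v_j)=(\alpha_i u_i,\beta_j v_j)$ on the target, making $f$ equivariant.

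The computation on this germ is immediate. The map is injective, so its $A_0^2$-locus is empty and $m_{A_0^2}=0$. The image is the subspace $\{v_j=0\}$, whose equivariant Poincar\'e dual in $H^*_G(\C^n)=\Q[\alpha_i,\beta_j]$ is $\prod_{j=1}^\ell \beta_j$; thus $n_{A_0}=\prod\beta_j$ (the automorphism multiplicity is $1$). Since both $\C^m$ and $\C^n$ are $G$-contractible, $f^*$ is the identity on $H^*BG$, so $f^*(n_{A_0})=\prod\beta_j$. Finally, the virtual normal bundle $f^*TN-TM$ has weights $\beta_1,\ldots,\beta_\ell$, giving $c_\ell(f)=\prod\beta_j$. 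Substituting, the constraint reduces to $0=R_{A_0^2}(c(f))+\prod\beta_j$, and because $c_1(f),\ldots,c_\ell(f)$ pull back to the elementary symmetric polynomials in $\beta_1,\ldots,\beta_\ell$, which are algebraically independent through degree $\ell$, the only homogeneous degree-$\ell$ solution is $R_{A_0^2}=-c_\ell$.

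I do not expect a serious obstacle: there is only one testing germ, and on it every ingredient of (\ref{multiform}) is transparent. The most delicate bookkeeping point is verifying the hypothesis of Theorem~\ref{equations_are_enough} (nonvanishing of the relevant equivariant Euler class) at $A_0$---but the $A_0$ stratum is open dense, its normal bundle is trivial, and its Euler class is $1$, so the hypothesis is automatic.
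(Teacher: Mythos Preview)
The paper does not give its own proof of this theorem; it is stated with a citation to Ronga. Your argument via Theorem~\ref{equations_are_enough}, testing only on the stable $A_0$ germ, is correct and complete: since $\codim A_0^2=\ell<\ell+1=\codim A_1$, the embedding is indeed the only monosingularity that needs to be checked, and on it every term is computed exactly as you describe. By contrast, the paper's illustrative computation in Example~\ref{whitney} treats only the case $\ell=1$ and derives the constraint from the stable $A_1$ germ (the Whitney umbrella), where both $m_{A_0^2}$ and $n_{A_0}$ are nontrivial classes; your choice of the $A_0$ germ is simpler (both sides collapse immediately) and works uniformly for all $\ell$.
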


\begin{theorem} \label{a3th} \cite{ddlb96}
For admissible maps $f:M^m\to N^n$ we have
$$m_{A_0^3}= f^*(n_{A_0^2}) - 2c_\ell f^*(n_{A_0})
+ 2 \Big(c_\ell^2 + \sum_{i=0}^{\ell-1}2^i
c_{\ell-1-i}c_{\ell+1+i}\Big).
$$ That is, the residue polynomial of the multisingularity $A_0^3$
is
$$ R_{A_0^3} = 2 \Big(c_\ell^2 + \sum_{i=0}^{\ell-1} 2^i c_{\ell-1-i}c_{\ell+1+i}\Big). $$
\end{theorem}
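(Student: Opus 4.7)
The plan is to use Kazarian's formula (\ref{multiform}) together with the interpolation method of Section~\ref{interpolation}. For $\alphabar=A_0^3$, the subsets $J$ with $1\in J\subsetneq\{1,2,3\}$ are $\{1\},\{1,2\},\{1,3\}$; using $R_{A_0}=1$ and $R_{A_0^2}=-c_\ell$ from Theorem~\ref{a2th}, formula (\ref{multiform}) collapses to
\[
m_{A_0^3} \;=\; R_{A_0^3} + f^*(n_{A_0^2}) - 2c_\ell\, f^*(n_{A_0}),
\]
so the theorem is equivalent to identifying $R_{A_0^3}$ with $2\bigl(c_\ell^2+\sum_{i=0}^{\ell-1}2^{i}c_{\ell-1-i}c_{\ell+1+i}\bigr)$. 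By Theorem~\ref{equations_are_enough}, $R_{A_0^3}$ is uniquely determined by requiring the formula to hold in $G_\xi$-equivariant cohomology for every stable representative $\xi$ with $\codim\xi\leq 2\ell$. Only two monosingularities qualify: $A_0$ (which gives a trivial condition, since the linear embedding has $c_i(f)=0$ and no multiple points) and $A_1$. Thus the whole proof reduces to one algebraic identity coming from the $A_1$-germ.

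For this I would work with the stable germ $f\colon(x,y_1,\dots,y_\ell)\mapsto(x^2,xy_1,\dots,xy_\ell,y_1,\dots,y_\ell)$ with torus Chern roots $\alpha,\beta_1,\dots,\beta_\ell$, set up exactly as in Section~\ref{TpA1}. Direct inspection of $f(p)=f(p')=f(p'')$ allows only two-preimage coincidences (at $y=0$, $x'=-x$), so $m_{A_0^3}=0$. The closure $\overline{N_{A_0^2}}$ is the $u$-axis $\{v=w=0\}\subset\C^{2\ell+1}$, with equivariant normal Euler class $\prod_i\beta_i(\beta_i-\alpha)$; multiplied by $\#\mathrm{Aut}(A_0^2)=2$ this gives $n_{A_0^2}=2\prod_i\beta_i(\beta_i-\alpha)$. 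For $n_{A_0}$ I would apply Theorem~\ref{a2th} to the same germ: $\overline{M_{A_0^2}}=\{y=0\}$ has Euler class $\prod_i(\beta_i-\alpha)$, and expanding the formula $c(f)=(1+2\alpha)\prod_i(1+\beta_i)/(1+\alpha)$ from Section~\ref{TpA1} yields the pleasant cancellation $\prod_i(\beta_i-\alpha)+c_\ell(f)=2\prod_i\beta_i$, hence $n_{A_0}=2\prod_i\beta_i$. Since $f^*$ is the identity on $H^*BT$, substituting into the displayed equation produces the concrete value $R_{A_0^3}=4c_\ell\prod_i\beta_i-2\prod_i\beta_i(\beta_i-\alpha)=2e_\ell(3c_\ell-2e_\ell)$ in $H^*BT$.

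It remains to check that, after substituting the Chern classes of $f$, the candidate $2\bigl(c_\ell^2+\sum_{i=0}^{\ell-1}2^{i}c_{\ell-1-i}c_{\ell+1+i}\bigr)$ reproduces this same element of $H^*BT$; uniqueness of $R_{A_0^3}$ as a universal polynomial then follows from the algebraic independence of $c_1(f),\dots,c_{2\ell}(f)$ in degrees $\leq 2\ell$, exactly as at the end of Section~\ref{TpA1}. The main obstacle is this algebraic verification. The geometric sequence $2^i$ has a natural source: the denominator in $c(f)=(1+2\alpha)\prod_i(1+\beta_i)/(1+\alpha)$ produces $1/(1-2\alpha)$-type generating functions once one regroups the convolution $\sum c_{\ell-1-i}c_{\ell+1+i}$. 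The cleanest strategy is generating-function bookkeeping: set $C(t)=\sum_j c_j(f)t^j$ and interpret the target sum as a coefficient/residue extraction, turning the verification into a short manipulation of rational functions in $t,\alpha,\beta$ rather than an index-by-index expansion. Once this identity is in hand, Theorem~\ref{equations_are_enough} closes the argument.
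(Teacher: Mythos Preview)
The paper does not give its own proof of Theorem~\ref{a3th}; it is quoted as a known result from \cite{ddlb96}. The closest the paper comes to a proof is its own Theorem~\ref{main}: specialized to $i=2$ it yields $R_{A_0^3}(\ell)=2\,R_{A_2}(\ell-1)$, and Ronga's Thom series $\ts_{A_2}$ then produces exactly the displayed polynomial. That argument proceeds by applying (\ref{nm3eq}) to $f_{A_1}(\ell)$ and $f_{A_2}(\ell)$, setting $\beta_\ell=0$, and recognising the resulting constraints as the defining interpolation conditions for $\tp_{A_2}(\ell-1)$. Your direct approach---compute the value $R_{A_0^3}$ must take at $f_{A_1}$ and then match the explicit candidate against it---is a legitimate alternative and is in the same interpolation spirit; it trades the clever reduction to a known Thom polynomial for a head-on generating-function identity.

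Two statements in your write-up are not correct as phrased, though the outline survives. First, the stable $A_0$ germ (the linear embedding $\C^m\hookrightarrow\C^{m+\ell}$ with symmetry $U(m)\times U(\ell)$) has $c_i(f)$ equal to the Chern classes of $U(\ell)$, not zero; the resulting constraint is $R_{A_0^3}\big|_{c_{>\ell}=0}=2c_\ell^2$, which is not tautological. It \emph{is} redundant, but for a different reason: setting $\alpha=0$ in the $A_1$ substitution reproduces exactly the $A_0$ substitution, so the $A_1$ constraint already contains the $A_0$ one. Second, the Chern classes $c_1(f_{A_1}),\dots,c_{2\ell}(f_{A_1})$ are \emph{not} algebraically independent for $\ell\ge 2$ (the target ring $\Q[\alpha,b_1,\dots,b_\ell]$ has Krull dimension $\ell+1<2\ell$; e.g.\ $c_{\ell+2}^2=c_{\ell+1}c_{\ell+3}$). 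What you actually need is the weaker statement that the substitution map is injective in cohomological degree $\le 2\ell$, and this is precisely what Theorem~\ref{equations_are_enough} provides for the pair $\{A_0,A_1\}$; combined with the $\alpha=0$ specialization just mentioned, injectivity at $A_1$ alone follows. With those two fixes, and once the promised generating-function verification is actually carried out, your argument goes through.
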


\begin{theorem} \cite{kazamorin}
For admissible maps $f:M^m\to N^n$ we have
$$m_{A_1A_0}=f^*(n_{A_0}) - 2 \Big(c_\ell c_{\ell+1} + \sum_{i=0}^{\ell-1} 2^i c_{\ell-1-i}c_{\ell+2+i} \Big),$$
\begin{eqnarray*}
m_{A_0A_1} &= & f^*(n_{A_1}) - 2 \Big(c_\ell c_{\ell+1} + \sum_{i=0}^{\ell-1} 2^i c_{\ell-1-i}c_{\ell+2+i} \Big) \\
           &= & f^*(f_!(c_{\ell+1})) - 2 \Big(c_\ell c_{\ell+1} + \sum_{i=0}^{\ell-1} 2^i c_{\ell-1-i}c_{\ell+2+i}
           \Big).
\end{eqnarray*}
That is, the residue polynomial of the multisingularity $A_1A_0$ is
$$ R_{A_0A_1} = -2 \Big(c_\ell c_{\ell+1} + \sum_{i=0}^{\ell-1} 2^i c_{\ell-1-i}c_{\ell+2+i} \Big).$$
\end{theorem}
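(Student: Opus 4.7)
The plan is to use the interpolation method of Theorem \ref{equations_are_enough}. Since $\codim(A_0A_1)=2\ell+1$, it suffices to check that formula (\ref{multiform}) with the proposed $R_{A_0A_1}$ holds for stable representatives of every contact singularity of codimension at most $2\ell+1$; from the table in Section \ref{zoo} these are only $A_0$ (codim $0$) and $A_1$ (codim $\ell+1$), since $A_2$ already has codim $2\ell+2$. The $A_0$ test is automatic: a stable $A_0$ germ is a codimension-$\ell$ embedding, so its Chern classes $c_j$ vanish for $j>\ell$, which kills every summand of the proposed $R_{A_0A_1}$; meanwhile $m_{A_0A_1}(\xi)$ and $n_{A_1}(\xi)$ both vanish because an embedding has no $A_1$ points.

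For the $A_1$ test, take the standard stable germ $\xi\colon(x,y_1,\ldots,y_\ell)\mapsto(x^2,xy_1,\ldots,xy_\ell,y_1,\ldots,y_\ell)$ with its $U(1)\times U(\ell)$-symmetry from Section \ref{TpA1}, giving $c(\xi)=(1+2\alpha)\prod_i(1+\beta_i)/(1+\alpha)$ in equivariant cohomology with Chern roots $\alpha,\beta_1,\ldots,\beta_\ell$. Two geometric observations: the origin is the unique $A_1$ point of $\xi$ and its fiber contains only itself, so $m_{A_0A_1}(\xi)=0$; and $n_{A_1}(\xi)$ is the equivariant class of $\{0\}\subset\C^{2\ell+1}$, i.e.\ the Euler class of the target representation, $n_{A_1}(\xi)=2\alpha\prod_i\beta_i\prod_i(\beta_i-\alpha)$. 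Using $\xi^*=\mathrm{id}$ and $R_{A_0}=1$, choosing $A_0$ as the distinguished element in (\ref{multiform}) reduces the whole problem to the single equality
\[
R_{A_0A_1}(c(\xi)) \;=\; -2\alpha\prod_i\beta_i\prod_i(\beta_i-\alpha).
\]

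The main obstacle is thus the purely algebraic identity
\[
c_\ell(\xi)\,c_{\ell+1}(\xi) + \sum_{i=0}^{\ell-1} 2^i\,c_{\ell-1-i}(\xi)\,c_{\ell+2+i}(\xi) \;=\; \alpha\prod_i\beta_i\prod_i(\beta_i-\alpha)
\]
in $\Q[\alpha,\beta_1,\ldots,\beta_\ell]$, where the $c_k(\xi)$ are the coefficients of $C(t)=(1+2\alpha t)B(t)/(1+\alpha t)$ with $B(t)=\prod_i(1+\beta_i t)$. My plan for this step is to exploit the Chern-class recursion $c_k+\alpha c_{k-1}=b_k+2\alpha b_{k-1}$ (coming from $(1+\alpha t)C(t)=(1+2\alpha t)B(t)$) to rewrite the left side as an iterated residue $\oint\!\oint C(s)C(t)W(t/s)\,ds\,dt/(s\,t^{2\ell+2})$ with $W(z)=z^\ell+\sum_{i=0}^{\ell-1}2^i z^{\ell-1-i}$; the rational form of $C$ lets one localize this contour integral at the pole $t=-1/\alpha$, which supplies the factor $\prod_i(\beta_i-\alpha)$, while the geometric-series structure of the weights $2^i$ in $W$ together with the numerator factor $(1+2\alpha t)$ produces the remaining $\alpha\prod_i\beta_i$. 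A more elementary alternative is to iterate the recursion directly to express $c_k=b_k+\alpha b_{k-1}-\alpha^2 b_{k-2}+\cdots$ and match both sides as polynomials in $\alpha$ and $b_1,\ldots,b_\ell$; I have checked this by hand for $\ell=1$ and $\ell=2$. Once the identity holds, $R_{A_0A_1}=R_{A_1A_0}$ is established, and the two source-class formulas follow from (\ref{multiform}): with $\alpha_1=A_0$, $m_{A_0A_1}=R_{A_0A_1}+f^*(n_{A_1})$, and substituting $n_{A_1}=f_!(c_{\ell+1})$ gives the third line; with $\alpha_1=A_1$ and $R_{A_1}=c_{\ell+1}$, we get $m_{A_1A_0}=R_{A_0A_1}+c_{\ell+1}f^*(n_{A_0})$, which is the first line.
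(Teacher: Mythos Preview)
The paper does not prove this theorem; it is quoted from \cite{kazamorin} as one of the three previously known general multisingularity formulas, so there is no ``paper's own proof'' to compare against. Your interpolation approach via Theorem~\ref{equations_are_enough} is entirely in the spirit of the paper and is the natural way to recover the result within this framework.

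There are two genuine gaps. First, the core of your argument is the algebraic identity
\[
c_\ell(\xi)\,c_{\ell+1}(\xi) + \sum_{i=0}^{\ell-1} 2^i\,c_{\ell-1-i}(\xi)\,c_{\ell+2+i}(\xi) \;=\; \alpha\prod_i\beta_i\prod_i(\beta_i-\alpha),
\]
and you do not actually prove it: you describe a contour-integral localization and a recursion alternative, verify $\ell=1,2$ by hand, and stop. Since this identity is the entire content of the theorem once the interpolation framework is in place, what you have written is an outline rather than a proof. (The identity can be proved cleanly from the recursion $c_k=b_k+\alpha b_{k-1}-\alpha^2 b_{k-2}+\cdots+(-1)^{k-1}\alpha^{k-1}b_1+(-1)^k\alpha^k+(-1)^{k-1}\alpha^k$, but you need to carry this through.)

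Second, your last sentence claims that $m_{A_1A_0}=R_{A_0A_1}+c_{\ell+1}f^*(n_{A_0})$ ``is the first line'' of the theorem, but the first line as printed is $m_{A_1A_0}=f^*(n_{A_0})+R_{A_0A_1}$, i.e.\ without the factor $c_{\ell+1}$. Your derivation from (\ref{multiform}) is correct and the printed first line appears to be a typo in the paper; you should flag this rather than assert that you have reproduced it.
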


There are basically two main reasons why the calculation of other
residue polynomials is more difficult.

First, no transparent geometric meaning of residue polynomials of
multisingularities has been found so far. While residue polynomials
of monosingularities are equivariant classes represented by
geometrically relevant varieties in $\E^0(m,n)$, hence they are part
of equivariant cohomology, the residue polynomials of
multisingularities do not seem to be part of equivariant cohomology.
In other words, the cohomology ring of the classifying space of
singularities is a ring of characteristic classes, while the
cohomology ring of the classifying space of multisingularities
contains Landweber-Novikov classes (see more details in
\cite{kazamulti}). Hence powerful techniques of equivariant
cohomology (e.g.~localization) can not be used directly for
multisingularities.

The second reason can be seen in the diagram of multisingularities
in Section \ref{multi}. The codimension of the multisingularities
considered in the above three theorems are smaller than the
codimension of any non-Morin, (i.e. $\Sigma^{\geq 2}$) singularity.
Therefore, non-Morin singularities can be disregarded when studying
those three multisingularities. As the table shows, we will have
``competing'' non-Morin singularities for any other
multisingularity.

The main result of the present paper is the calculation of residue
polynomials in such non-Morin cases, namely the residue polynomial
$R_{A_0^i}$ for all $\ell$ and $i\leq 7$.

\section{General quadruple point formula}
\label{4-tuple}
 In order to emphasize the relative dimension,
let $R_{\alphabar}(\ell)$ denote the residue polynomial of the
multisingularity $\alphabar$ for maps of relative dimension $\ell$.
We are now ready to state the main theorem.

\begin{theorem} \label{main} For $i \leq 6$ we have
$$R_{A_0^{i+1}}(\ell) = (-1)^{i}i!R_{A_{i}}(\ell-1).$$
\end{theorem}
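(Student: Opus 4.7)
The plan is to establish the identity via the interpolation method of Theorem~\ref{equations_are_enough}. Both $R_{A_0^{i+1}}(\ell)$ and $R_{A_i}(\ell-1)$ are polynomials in $c_1, c_2, \ldots$ of the same degree $i\ell$, so each is determined by its evaluation at the finite collection of stable representatives of monosingularities of codimension at most $i\ell$. The goal is to match these two interpolation systems, up to the uniform factor $(-1)^i i!$.

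I would proceed by induction on $i$. The base case $i=1$ is exactly Theorem~\ref{a2th}, since $R_{A_1}(\ell-1) = c_\ell$. For the inductive step, write Kazarian's formula (\ref{multiform}) for $\alphabar = A_0^{i+1}$ as
\begin{equation*}
m_{A_0^{i+1}} = R_{A_0^{i+1}} + \sum_{k=1}^{i} \binom{i}{k-1} R_{A_0^{k}} f^*(n_{A_0^{i+1-k}}),
\end{equation*}
where $\binom{i}{k-1}$ counts the subsets $J \subsetneq \{1, \ldots, i+1\}$ with $1 \in J$ and $|J| = k$. For each stable germ $\xi$ with $\codim \xi \leq i\ell$---namely the Morin germs $A_0, A_1, \ldots, A_{i-1}$ together with the non-Morin germs such as $III_{2,2}$ that fit below this bound---I would specialize the identity in $G_\xi$-equivariant cohomology. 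The $(i+1)$-fold image class $m_{A_0^{i+1}}(\xi)$ is computed by Gysin pushforward along $\xi$ as in Example~\ref{whitney}; the classes $n_{A_0^{j}}(\xi)$ for $j \leq i$ are obtained similarly; and the polynomials $R_{A_0^{k}}(c(\xi))$ for $k \leq i$ are supplied by the inductive hypothesis as $(-1)^{k-1}(k-1)! R_{A_{k-1}}(\ell-1)$ evaluated at $c(\xi)$, the right-hand side being explicit via the B\'erczi--Szenes formulas \cite{bsz06}.

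This yields a finite system of linear equations in the rings $H^*(BG_\xi)$ whose unique solution is $R_{A_0^{i+1}}(\ell)$. The central step is to recognize this system as the interpolation system that characterizes $R_{A_i}(\ell-1)$, uniformly rescaled by $(-1)^i i!$. Geometrically, this reflects that an $A_i$ point in relative dimension $\ell-1$ unfolds, after adjoining one target dimension, into a 1-parameter family whose generic fibre exhibits an $A_0^{i+1}$ configuration; the factor $i!$ records the $\mathfrak{S}_i$ symmetry permuting $\alpha_2, \ldots, \alpha_{i+1}$, and the sign arises from the orientation of the normal direction along which the $i+1$ preimages collide.

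The main obstacle is this final matching step, and in particular the careful bookkeeping at the non-Morin germs such as $III_{2,2}$ that enter the interpolation list once $i \geq 3$---each such germ must be handled individually to verify that both sides agree on its constraint. The restriction to $i \leq 6$ in the statement is forced by the availability of $R_{A_i}$ in closed form: extending the identity to $i = 7$ would require the Thom polynomial of $A_7$, which is presently unknown.
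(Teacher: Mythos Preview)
Your proposal has the right framework---interpolation via Theorem~\ref{equations_are_enough}---but the part you call ``the central step'' is precisely where the proof lives, and you have not supplied it. Saying that the system for $R_{A_0^{i+1}}(\ell)$ ``is recognized as'' the system for $R_{A_i}(\ell-1)$, with a heuristic about unfolding and collision, is not a proof; one needs a concrete mechanism that transforms constraints in relative dimension $\ell$ into constraints in relative dimension $\ell-1$.

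The paper's mechanism, which you are missing, runs as follows. Using Lemma~\ref{adjlemma} one rewrites (\ref{multiform}) for the stable germ $f_\xi(\ell)$ as
\[
\bar m_{i+1}(f_\xi)=\frac{1}{i!}R_{A_0^{i+1}}(\ell)\big(c(f_\xi)\big)+n_1\cdot(\ldots),
\]
so that \emph{all} lower residue polynomials and image classes are absorbed into a single term divisible by $n_1$. This makes your induction unnecessary: one never needs the values of $R_{A_0^k}$ for $k\le i$ separately. For every relevant $\xi$ one has $n_1(f_\xi(\ell))=(\text{const})\cdot\prod^{\ell}\beta_j$ and $c(f_\xi(\ell))=(\text{rational in }\alpha)\cdot\prod^{\ell}(1+\beta_j)$. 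Setting $\beta_\ell=0$ kills the $n_1$ term and simultaneously turns $c(f_\xi(\ell))$ into $c(f_\xi(\ell-1))$; the left-hand side $\bar m_{i+1}|_{\beta_\ell=0}$ becomes exactly (up to the factor $(-1)^i i!$) the interpolation constraint that pins down $R_{A_i}(\ell-1)$ at $\xi(\ell-1)$. This substitution trick is the heart of the argument and is absent from your sketch.

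One further correction: the restriction $i\le 6$ is not because $R_{A_7}$ is unknown. The identity $R_{A_0^{i+1}}(\ell)=(-1)^i i!\,R_{A_i}(\ell-1)$ is meaningful regardless of whether either side is explicitly computed. The bound comes from Theorem~\ref{equations_are_enough}: for $i\ge 7$ the codimension $i\ell$ exceeds $6\ell+8$, and moduli of singularities appear in $\E^0(m,n)$ at codimension $6\ell+9$, so the interpolation hypotheses fail (see Section~\ref{moduli}).
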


Since the polynomial $R_{A_i}$ is known for $i\leq 6$ \cite{bsz06}
this theorem calculates the polynomial $R_{A_0^i}$, hence determines
e.g. the general quadruple point formula. After some preparations,
the proof for the case $i=3$ will be given in Section \ref{proof}.
The cases $i=4,5,6$ follow similarly, see Section~\ref{moduli}.

\subsection{Multiple point formulas for germs.}

In what follows let us set the cohomology classes in the source and
target of the set of $j$-tuples of points of a map $f$ as
$\bar{m}_j(f)$, and $\bar{n}_j(f)$ respectively. That is,
$\bar{m}_j(f) = \bar{m}_{A_0^j}(f)$ and $\bar{n}_j(f) =
\bar{n}_{A_0^j}(f)$. We also use $n_1$ for $\bar{n_1}$. Using these
notations the defining equations (\ref{multiform}) of $R_{A_0^i}$'s
can be brought to the following form
\begin{eqnarray} \label{mieq}
\label{m2eq}\bar{m}_2(f) &=& f^*(\bar{n}_1(f)) + R_{A_0^2}(\ell) \\
\label{m3eq}\bar{m}_3(f) &=&f^*(\bar{n}_2(f)) + R_{A_0^2}(\ell)f^*(\bar{n}_1(f)) + \frac{1}{2}R_{A_0^3}(\ell)\\
\label{m4eq} \bar{m}_4(f)& =& f^{*}(\bar{n}_3(f)) +
R_{A_0^2}(\ell)f^{*}(\bar{n}_2(f)) +
\frac{1}{2}R_{A_0^3}(\ell)f^{*}(\bar{n}_1(f)) +
\frac{1}{6}R_{A_0^4}(\ell).
\end{eqnarray}

We want to apply the method of interpolation from Section
\ref{interpolation}, hence we want to apply equations
(\ref{m2eq})-(\ref{m4eq}) for stable germs with relative dimension
$\ell$, whose codimensions do not exceed the codimension of the
relevant $\bar{m}_i$. For stable germs, however, more information is
available for some of the ingredients.

\begin{lemma} \label{adjlemma}
Let $f$ be a stable germ with relative dimension $\ell$; and let $G$
be a symmetry group of $f$ with representations $\rho_0$ and
$\rho_1$ on the source and target spaces respectively. For a
$G$-representation $\rho$ let $e(\rho)$ denote the $G$-equivariant
Euler class of $\rho$, that is, the product of the weights of
$\rho$. Then in $G$-equivariant cohomology we have
\begin{itemize}
\item{} $f^*$ is isomorphism;
\item{} $f^*(n_1) e(\rho_0)=f^*(e(\rho_1))$;
\item{} $f^*(\bar{n_r})=\frac{1}{r} \bar{m}_r f^*(n_1)$.
\end{itemize}
\end{lemma}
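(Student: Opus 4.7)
My plan is to exploit the fact that $\C^m$ and $\C^n$ are both $G$-equivariantly contractible (they are linear $G$-representations), so that their equivariant cohomology rings are canonically identified with $H^*(BG)$ via the structure maps to a point. Because $f$ is $G$-equivariant, the pullback $f^*$ commutes with these two identifications, and hence becomes the identity of $H^*(BG)$ under them. This observation alone settles the first bullet, and is the central tool for the remaining two.

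For the second bullet, I will view $e(\rho_0)$ and $e(\rho_1)$ as the equivariant fundamental classes of the origins in $\C^m$ and $\C^n$ respectively, obtained as Gysin pushforwards of $1$ along $\{0\}\hookrightarrow \C^m$ and $\{0\}\hookrightarrow \C^n$. Functoriality of $f_!$ applied to the factorisation $\{0\}\hookrightarrow \C^m\xrightarrow{f} \C^n$ (valid since $f(0)=0$) yields $f_!(e(\rho_0))=e(\rho_1)$. Computing the same pushforward instead via the projection formula, using that $e(\rho_0)\in H^*(BG)$ is fixed by $f^*$, gives
\[
f_!(e(\rho_0))=f_!\bigl(f^*(e(\rho_0))\cdot 1\bigr)=e(\rho_0)\cdot f_!(1)=e(\rho_0)\cdot n_1.
\]
Equating the two expressions and applying $f^*$ delivers the desired identity $f^*(n_1)\,e(\rho_0)=f^*(e(\rho_1))$.

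The third bullet reduces to a symmetry-factor bookkeeping combined with the same projection-formula trick. From (\ref{nalpha}) and the multiplicity conventions $n_{A_0^r}=r!\,\bar{n}_r$, $m_{A_0^r}=(r-1)!\,\bar{m}_r$, one obtains $r\,\bar{n}_r=f_!(\bar{m}_r)$. Since the source is contractible, $\bar{m}_r$ lies in $H^*_G(\C^m)\cong H^*(BG)$, and under the identification can be viewed as a class $\tilde{m}_r\in H^*_G(\C^n)$ satisfying $f^*(\tilde{m}_r)=\bar{m}_r$. The projection formula then gives $f_!(\bar{m}_r)=f_!(f^*(\tilde{m}_r))=\tilde{m}_r\cdot n_1$, and applying $f^*$ and dividing by $r$ yields $f^*(\bar{n}_r)=\frac{1}{r}\bar{m}_r\,f^*(n_1)$.

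The only genuine subtlety I anticipate is not in the algebra but in setting up the equivariant Gysin map $f_!$ for the non-proper map $f$: one has to work inside equivariant Borel--Moore homology, or invoke the Thom isomorphism for the relevant $G$-invariant subvarieties, in order to make sense of $f_!$ and have the projection formula at one's disposal. Once that framework is fixed, the whole lemma reduces to a few lines of manipulation inside $H^*(BG)$.
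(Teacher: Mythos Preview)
Your argument is correct and matches the paper's proof almost line for line: both use that $f^*$ is the identity on $H^*(BG)$, that $f_!(e(\rho_0))=e(\rho_1)$ via the origin, and the projection formula; the paper merely packages the latter once as $f^*(f_!(z))=z\,f^*(n_1)$ and then specializes to $z=e(\rho_0)$ and $z=\bar m_r$. Your worry about non-properness is misplaced: a stable (finite) germ is a proper map on suitable representatives, and the paper says exactly this (``which holds for any proper map, therefore for any stable map germ too''), so no detour through Borel--Moore homology is needed.
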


\begin{proof} The map $f$ is equivariantly homotopic to the map of a
one point space to a one point space, hence $f^*:H^*BG\to H^*BG$ is
the identity map.

Now recall the adjunction formula for the Gysin map $f_!$ (which
holds for any proper map, therefore for any stable map germ too):
$$f_!(f^*(x)y)=xf_!(y).$$
Applying $f^*$ to this formula, and writing $z$ for $f^*(x)$, and
substituting $y=1$ we obtain
\begin{equation}
\label{germadjunction} f^*(f_!(z))=z f^*(n_1),
\end{equation}
where we also used that $f_!(1)$ is $n_1$. Since $f^*$ is an
isomorphism (hence surjective) this formula holds for any $z$.

Observe that $f_!(e(\rho_0))=e(\rho_1)$. Indeed, the Poincar\'e dual
of $e(\rho_0)$ is the homology class of 0 in the source, its
homology push-forward is the homology class 0 in the target, whose
Poincar\'e dual is then $e(\rho_1)$. Therefore substituting
$z=e(\rho_0)$ in (\ref{germadjunction}) we obtain the second
statement of the lemma.

Observe that $f_!(\bar{m_r})=r \bar{n_r}$. Therefore substituting
$z=\bar{m_r}$ into (\ref{germadjunction}) we obtain the third
statement.
\end{proof}

\begin{remark} Since $f^*$ is an isomorphism for germs, we will
sometimes suppress it from the notation. Observe that if
$e(\rho_0)\not=0$ then the second statement can be rewritten as
$f^*(n_1)=e(\nu(f))$, the equivariant Euler class of the virtual
normal bundle. The divisibility of $e(\rho_1)$ with $e(\rho_0)$ is a
remarkable property of stable germs. For instance it does not hold
for the non-proper blow-up map $(x,y)\to (x,xy)$ with group
$U(1)\times U(1)$ acting via $\rho_0=\alpha\oplus \beta$,
$\rho_1=\alpha\oplus (\alpha\otimes \beta)$.
\end{remark}

Using the statements of Lemma~\ref{adjlemma} we can bring formulas
(\ref{m2eq})-(\ref{m4eq}) to the forms

\begin{eqnarray}
\label{nm2eq}\bar{m}_2(f) &=& R_{A_0^2}(\ell) + n_1, \\
\label{nm3eq}\bar{m}_3(f) &=& \frac{1}{2}R_{A_0^3}(\ell)+ n_1 \big( \ldots\big), \\
\label{nm4eq}\bar{m}_4(f) & =& \frac{1}{6}R_{A_0^4}(\ell) + n_1
\big( \ldots \big),
\end{eqnarray}
where $n_1(\ldots)$ stands for a term divisible by $n_1$.

We will use these formulas to calculate certain substitutions of
residue polynomials. The variables of these polynomials are
$c_1,c_2,\ldots$. We will use the following notation for polynomials
$p$ with those variables: $p(1+x_1+x_2+\ldots)$ will denote the
substitution $c_1=x_1$, $c_2=x_2, \ldots$. Furthermore, the series
$1+x_1+x_2+\ldots$ will be usually given by (the Taylor series of) a
rational function. For example,
$$p\big( \frac{1+2\alpha }{1+\alpha }\big)$$
means the polynomial $p$ with substitution $c_1=\alpha$,
$c_2=-\alpha^2$, $c_3=\alpha^3$, etc.

\subsection{Some stable singularities and their
symmetries}\label{prototypes}

Along the way of proving Theorem~\ref{main} we will need the
following stable singularities.

$\bullet$ A stable $A_1$ singularity is $f_{A_1}=f_{A_1}(\ell):
\C^{\ell+1},0 \to \C^{2\ell+1},0$:
$$f_{A_1}: (x,y_1,\ldots,y_\ell)\mapsto
(x^2,xy_1,\ldots,xy_{\ell},y_1,\ldots,y_\ell).$$ Just like in
Section \ref{TpA1}, we consider its maximal compact symmetry group
$G=U(1)\times U(\ell)$ with the representations
$$\rho_1 \oplus (\overline{\rho_1} \otimes \rho_\ell),
\qquad \rho_1^2\oplus \rho_\ell \oplus(\overline{\rho_1} \otimes
\rho_\ell)$$ on the source and target spaces. For the $G$-equivariant
cohomology ring  we have,
$$H^*BG \leq \Q[\alpha,\beta_1,\ldots,\beta_\ell],$$
where $\alpha$, and $\beta_i$'s are the Chern roots of the
groups $U(1)$, and $U(\ell)$. Using this notation the total Chern
class of the virtual normal bundle of $f_{A_1}$ is
$$c(f_{A_1})=\frac{(1+2\alpha)\prod^{\ell}
(1+\beta_i)}{(1+\alpha)}.$$

Below is the list of the analogous data for singularities $A_2$,
$III_{2,2}$, and $A_3$.

$\bullet$ A stable $A_2$ singularity is $f_{A_2}=f_{A_2}(\ell):
\C^{2\ell+2},0 \to \C^{3\ell+2},0$:
$$f_{A_2}: (x,a,y_1,\ldots,y_\ell,z_1,\dots,z_\ell)\mapsto
(x^3+xa,x^2y_1+xz_1,\ldots,x^2y_{\ell}+xz_\ell,a,y_1,\ldots,y_\ell,z_1,\ldots
z_\ell).$$ Its maximal compact symmetry group $G=U(1)\times U(\ell)$
acts by the the representations
$$\rho_1 \oplus \rho_1^2 \oplus (\overline{\rho_1}^2 \otimes \rho_\ell) \oplus (\overline{\rho_1} \otimes \rho_\ell)
\qquad \rho_1^3 \oplus \rho_\ell \oplus \rho_1^2 \oplus (\overline{\rho_1}^2 \otimes \rho_\ell) \oplus (\overline{\rho_1} \otimes \rho_\ell)$$
on the source and target spaces. For the $G$-equivariant
cohomology ring we have,
$$H^*BG \leq \Q[\alpha,\beta_1,\ldots,\beta_\ell],$$
where $\alpha$, and $\beta_i$'s are the Chern roots of the
groups $U(1)$, and $U(\ell)$. Using this notation the total Chern
class of the virtual normal bundle of $f_{A_2}$ is
$$c(f_{A_2})=\frac{(1+3\alpha)\prod^{\ell}
(1+\beta_i)}{(1+\alpha)}.$$

$\bullet$ A stable $III_{2,2}$ singularity is $f_{III_{2,2}}=f_{III_{2,2}}(\ell):
\C^{2\ell+4},0 \to \C^{3\ell+4},0$:
$$f_{III_{2,2}}: (x_1,x_2,a,b,c,d,y_1,\ldots,y_{\ell-1},z_1,\ldots,z_{\ell-1}) \mapsto$$
$$(x_1x_2,x_1^2+cx_1+ax_2,x_2^2+bx_1+dx_2,y_1x_1+z_1x_2,\ldots, y_{\ell-1}x_1+z_{\ell-1}x_2,a,b,c,d,y_1,\ldots,y_{\ell-1},z_1,\ldots,z_{\ell-1})$$
We consider its symmetry group $G=U(1)\times U(1)\times U(\ell-1)$
with the representations
$$\rho_1 \oplus \rho_2 \oplus (\rho_1^2 \otimes \overline{\rho_2}) \oplus (\overline{\rho_1}\otimes \rho_2^2) \oplus \rho_1 \oplus \rho_2
\oplus (\rho_\ell \otimes \overline{\rho_1}) \oplus (\rho_\ell \otimes \overline{\rho_2}),
$$
$$ (\rho_1 \otimes \rho_2) \oplus \rho_1^2 \oplus \rho_2^2 \oplus \rho_\ell \oplus (\rho_1^2 \otimes \overline{\rho_2}) \oplus (\overline{\rho_1}\otimes \rho_2^2) \oplus \rho_1 \oplus \rho_2
\oplus (\rho_\ell \otimes \overline{\rho_1}) \oplus (\rho_\ell \otimes \overline{\rho_2})
$$

on the source and target spaces. For the $G$-equivariant
cohomology ring we have,
$$H^*BG \leq \Q[\alpha_1, \alpha_2,\beta_1,\ldots,\beta_{\ell-1}],$$
where $\alpha_i$, and $\beta_j$'s are the Chern roots of the respective
$U(1)$ groups, and of the group $U(\ell-1)$. Using this notation the total Chern
class of the virtual normal bundle of $f_{III_{2,2}}$ is
$$c(f_{III_{2,2}})=\frac{(1+2\alpha_1)(1+2\alpha_2)(1+(\alpha_1+\alpha_2))\prod^{\ell-1}
(1+\beta_i)}{(1+\alpha_1)(1+\alpha_2)}.$$

$\bullet$ A stable $A_3$ singularity is $f_{A_3}=f_{A_3}(\ell):
\C^{3\ell+3},0 \to \C^{4\ell+3},0$:
$$f_{A_3}: (x,a,b,w_1,\ldots, w_\ell,y_1,\ldots,y_\ell,z_1,\dots,z_\ell)\mapsto$$
$$(x^4+x^2a+xb,x^3w_1+x^2y_1+xz_1,\ldots,x^3w_\ell+x^2y_{\ell}+xz_\ell,a,b,w_1,\ldots,w_\ell,y_1,\ldots,y_\ell,z_1,\ldots z_l).$$
We consider its symmetry group $G_f=U(1)\times U(\ell)$
with the representations
$$\rho_1 \oplus \rho_1^2 \oplus \rho_1^3 \oplus (\overline{\rho_1}^3 \otimes \rho_\ell) \oplus (\overline{\rho_1^2} \otimes \rho_\ell) \oplus
(\overline{\rho_1} \otimes \rho_\ell)
$$
$$\rho_1^4 \oplus \rho_\ell \oplus \rho_1^2 \oplus \rho_1^3 \oplus (\overline{\rho_1}^3 \otimes \rho_\ell) \oplus (\overline{\rho_1^2} \otimes \rho_\ell) \oplus
(\overline{\rho_1} \otimes \rho_\ell)$$ on the source and target spaces. The $G$-equivariant
cohomology ring
$$H^*BG \leq \Q[\alpha,\beta_1,\ldots,\beta_\ell],$$
where $\alpha$, and $\beta_i$'s are the Chern roots of the
groups $U(1)$, and $U(\ell)$. Using this notation the total Chern
class of the virtual normal bundle of $f_{A_3}$ is
$$c(f_{A_3})=\frac{(1+4\alpha)\prod^{\ell}
(1+\beta_i)}{(1+\alpha)}.$$

\subsection{Proof of Theorem \ref{main}} \label{proof}

Now we prove Theorem \ref{main} for $i=3$, that is
\begin{equation}
R_{A_0^4}(\ell)=-6 R_{A_3}(\ell-1).
\end{equation}

\begin{proof}
Consider the stable singularity of type $A_1$ with relative
dimension $\ell$ from Section \ref{prototypes}. This map $f_{A_1}$
has no quadruple point, hence $\bar{m}_4=0$ for it. This can be checked
directly, or using the fact from singularity theory that the highest
multiple points of a stable singularity with local algebra $Q$ are
the $\delta$-tuple points, where $\delta$ is the dimension of the
local algebra $Q$.

Lemma \ref{adjlemma} above shows that for $f_{A_1}$ we have
$$n_1(f_{A_1})=\frac{2\alpha \prod^\ell\beta_i \prod^{\ell}(\beta_i-\alpha)}{\alpha
\prod^{\ell}(\beta_i-\alpha)}=2\prod^{\ell} \beta_i.$$ Thus, for
$f_{A_1}$ equation (\ref{nm4eq}) becomes
$$0=\frac{1}{6} R_{A_0^4}(\ell)\big( \frac{(1+2\alpha)\prod^\ell
(1+\beta_i)}{(1+\alpha)}\big) + \big(2\prod^\ell \beta_i \big) \big(\ldots
\big).$$ Plugging in $\beta_{\ell}=0$ we obtain
\begin{equation}
0=R_{A_0^4(\ell)}\big( \frac{(1+2\alpha)\prod^{\ell-1}
(1+\beta_i)}{(1+\alpha)}\big). \label{q1}
\end{equation}

We repeat the above arguments for the stable singularity of type $A_2$ with relative
dimension $\ell$, and we obtain

\begin{equation}
0=R_{A_0^4(\ell)}\big( \frac{(1+3\alpha)\prod^{\ell-1}
(1+\beta_i)}{(1+\alpha)}\big). \label{q2}
\end{equation}

The argument for the $III_{2,2}$ singularity is similar.
We have $$n_1(f_{III_{2,2}})=\frac{2\alpha_1 2\alpha_2 (\alpha_1+\alpha_2) \prod^{\ell-1}\beta_i \prod^{\ell-1}(\beta_i-2\alpha_1)(\beta_i-2\alpha_2)(\beta_i-\alpha_1-\alpha_2)}{\alpha_1 \alpha_2
\prod^{\ell-1}(\beta_i-2\alpha_1)(\beta_i-2\alpha_2)(\beta_i-\alpha_1-\alpha_2)}=4(\alpha_1+\alpha_2)\prod^{\ell-1} \beta_i.$$
Thus for $f_{III_{2,2}}$ equation (\ref{nm4eq}) becomes
$$0=\frac{1}{6}R_{A_0^4(\ell)}\big( \frac{(1+2\alpha_1)(1+2\alpha_2)(1+(\alpha_1+\alpha_2))\prod^{\ell-1}
(1+\beta_i)}{(1+\alpha_1)(1+\alpha_2)}\big) +
4(\alpha_1+\alpha_2)\prod^{\ell-1} \beta_i \big( \ldots \big).$$
Substituting $\beta_{\ell-1} = 0$ we obtain
\begin{equation}
0=R_{A_0^4(\ell)}\big( \frac{(1+2\alpha_1)(1+2\alpha_2)(1+(\alpha_1+\alpha_2))\prod^{\ell-2}
(1+\beta_i)}{(1+\alpha_1)(1+\alpha_2)}\big). \label{q3}
\end{equation}

Now consider the stable singularity of type $A_3$ with relative
dimension $\ell$ from Section \ref{prototypes}. The closure of the
quadruple point set of $f_{A_3}$ in the source space is $\{w_i=0,
y_i=0, z_i=0\}$. Hence for $f_{A_3}$ we have $\bar{m}_4=$ Euler
class of the normal bundle to $\{w_i=0, y_i=0, z_i=0\}$. That is
$$\bar{m}_4=\prod^{\ell}(\beta_i-\alpha)(\beta_i-2\alpha)(\beta_i-3\alpha).$$

\noindent Lemma \ref{adjlemma} above shows that for $f_{A_3}$ we have
$$n_1(f_{A_3})=4\prod^{\ell} \beta_i.$$ Thus, for
$f_{A_3}$ equation (\ref{nm4eq}) becomes

$$\prod^{\ell}
(\beta_i-\alpha)(\beta_i-2\alpha)(\beta_i-3\alpha) =\frac{1}{6}
R_{A_0^4}(\ell)\big( \frac{(1+4\alpha)\prod^\ell
(1+\beta_i)}{(1+\alpha)}\big) + 4\prod^\ell \beta_i \big(\ldots
\big).$$ Plugging in $\beta_{\ell}=0$ we obtain
\begin{equation}
-6\alpha^3 \prod^{\ell-1}
(\beta_i-\alpha)(\beta_i-2\alpha)(\beta_i-3\alpha)=\frac{1}{6}R_{A_0^4(\ell)}\big(
\frac{(1+4\alpha)\prod^{\ell-1}
(1+\beta_i)}{(1+\alpha)}\big).\label{q4}
\end{equation}

Observe that formulas (\ref{q1}), (\ref{q2}) (\ref{q3}) and
(\ref{q4}) mean that the polynomial $-\frac{1}{6}R_{A_0^4}(\ell)$
satisfies the following properties: (i) it vanishes when applied to
$f_{A_1}(\ell-1)$, $f_{A_2}(\ell-1)$, $f_{III_{2,2}}(\ell-1)$ ; (ii)
it gives the Euler class of the source space when applied to
$f_{A_3}(\ell-1)$. These are exactly the properties of the
polynomial $R_{A_3}(\ell-1)$ applied to these four singularities.
According to Theorem~\ref{equations_are_enough}, these properties
determine $R_{A_3}(\ell-1)$, hence we have proven that
$R_{A_0^4}(\ell)=-6R_{A_3}(\ell-1)$.
\end{proof}

In summary we obtained the {\em general quadruple point formula}:
\begin{equation}\label{general-main}
m_4 = f^*(n_3) - 3c_\ell f^*(n_2) + 6 \Big(c_\ell^2 +
\sum_{i=0}^{\ell-1} 2^i c_{\ell-1-i}c_{\ell+1+i}\Big)f^*(n_1) +
p(c_i)
\end{equation}
\noindent where
$$ p(c_i) = R_{A_0^4}= -6 \Bigg(\sum_{i=0}^\infty 2^i c_{\ell-i}c_\ell c_{\ell+i} + \frac{1}{3} \sum_{i=1}^\infty \sum_{j=1}^\infty 2^i3^jc_{\ell-i}c_{\ell-j}c_{\ell+i+j}
+ \frac{1}{2} \sum_{i=0}^\infty \sum_{j=0}^\infty
a_{i,j}c_{\ell-i-j}c_{\ell+i}c_{\ell+j} \Bigg)$$ \noindent where
$c_0=1, c_{<0}=0$ and with the $a_{i,j}$'s defined as for the Thom
polynomial of $A_3$, that is:
\[\sum_{i,j}
a_{i,j}u^iv^j=\frac{u\frac{1-u}{1-3u}+v\frac{1-v}{1-3v}}{1-u-v}.\]

\begin{remark}Another way of viewing the $a_{i,j}$'s is as the entries of the following,
modified Pascal's triangle
\begin{tabular}{ccccccccccccccccccc}
  &   &   &   & $a_{0,0}$ &   &   &   &  &   &   &   &  &   &  0 &   &  \\
  &   &   & $a_{1,0}$ &   & $a_{0,1}$ &   &   &   &   &   &  & &  1&   &  1 &\\
  &   & $a_{2,0}$ &   & $a_{1,1}$ &   & $a_{0,2}$ &   &  &  = &  &   & 3 &   &  2 &   & 3\\
  & $a_{3,0}$ &   & $a_{2,1}$ &   &  $a_{1,2}$ &   & $a_{0,3}$ &  &   &   &   9&   &  5  &   & 5 & &9 \\
$a_{4,0}$ &   & $a_{3,1}$ &   &  $a_{2,2}$ &   & $a_{1,3}$ &   & $a_{0,4}$& & 27 &   & 14 &   &  10 &   & 14 &   & 27 \\
\end{tabular}
\smallskip

\noindent where the rule for the $i,j$th entry remains the same, but
we have placed powers of 3 on the edges instead of 1's.
\end{remark}

\subsection{Higher multiple point formulas} \label{moduli}
The proof of Theorem \ref{main} for $i=4,5,6$ goes along the same
line as for $i=3$. One considers the finitely many monosingularities
whose codimension is less than $(i+1)\ell$, as well as the
monosingularity $A_i(\ell)$. Applying the defining relation of
$R_{A_0^{i+1}}$ for these monosingularities (in equivariant
cohomology) results in certain formulas for different
specializations of $R_{A_0^{i+1}}$. Plugging in 0 for the ``last
Chern root'', just like in (\ref{q1}), one obtains some shorter,
simpler formulas, which turn out to mean that the residue polynomial
$(-1)^i/i!\cdot R_{A_0^{i+1}}(\ell)$ satisfies the exact same
substitutions as $R_{A_{i}}(\ell-1)$. Using the statement that these
substitutions determine $R_{A_{i}}(\ell-1)$ (Theorem
\ref{equations_are_enough}) we conclude that
$R_{A_0^{i+1}}(\ell)=(-1)^i i!R_{A_{i}}(\ell-1)$.

One naturally conjectures that $R_{A_0^{i+1}}(\ell)=(-1)^i i!
R_{A_i}(\ell-1)$ holds for all $i$. We found reasons supporting this
conjecture, but no proof. The method this article uses certainly
does not work for $i>6$. The reason is a 1-dimensional family of
singularities that together form a codimension $6\ell+9$ variety in
$\E^0(n,n+\ell)$ (for large $\ell$) \cite{mather6}. Hence, beyond
codimension $6\ell+8$ we can not apply
Theorem~\ref{equations_are_enough}.

\section{4-secants to Smooth Projective Varieties}
\label{4-secants}

As an application of the quadruple point formula we find the number
of 4-secant planes to smooth projective varieties. The method can be
tailored to find the number of 4-secant (or 5-, 6-, or 7-secant)
linear spaces of other dimensions.

Let $i:V^a\subset \P^{4a+2}$ be a smooth projective variety, and let
$\G=\Gr_2\P^{4a+2}=\Gr_3\C^{4a+3}$ denote the Grassmannian of
projective 2-planes in $\P^{4a+2}$. Consider the following incidence
varieties:
\begin{eqnarray*}
\B &:= &\{(x,P) \in V \times \G\ |\ x\in P \}, \\
\F &:= &\{(x,P) \in \P^{4a+2} \times \G\ |\ x \in P \}.
\end{eqnarray*}

The two projections of $\textbf{F}$ to $\P^{4a+2}$ and $\G$ will be
denoted by $q$ and $\pi$. Both are fibrations with fibers
$\Gr_2\C^{4a+2}$ and $\P^2$, respectively. The restriction of $q$ to
the variety $V$ is the fibration $p:\textbf{B}\to V$. Hence we
obtain the following diagram.

\begin{equation} \label{4secant}
        \begin{CD}
          \textbf{B}^{9a} @>j>> \textbf{F}^{12a+2} @>\pi>> \G^{12a}\\
            @VVpV             @VVqV \\
         V^a @>i>>    \Pn^{4a+2},
         \end{CD}
\end{equation}
where upper indexes mean dimensions. Observe that the quadruple
points of the map $f=\pi \circ j$ correspond bijectively to planes
intersecting $V$ exactly four times, ie. 4-secant planes.

We will make the assumption that the map $f$ is admissible (cf.
Remark~\ref{unpleasant}). Hence, the number $N_a$ of 4-secant planes
to $V$ is calculated as
\begin{eqnarray}
N_a& = &  \frac{1}{4!}\int_{\G} n_{A_0^4(f)} \\
   & = & \frac{1}{4!}\int_{\G}
f_!(R_{A_0})^4 +
6f_!(R_{A_0^2})f_!(R_{A_0})^2+3f_!(R_{A_0^2})^2+4f_!(R_{A_0^3})f_!(R_{A_0})+f_!(R_{A_0^4}),
\label{int}
\end{eqnarray}
where the polynomials $R_{A_0^i}$ are evaluated at the Chern classes
of the virtual normal bundle $\nu_f$ of~$f$. In the rest of this
section we show how this integral can be calculated.

First observe that $\nu_f=\nu_j \oplus j^*\nu_\pi=p^*(\nu_i)\ominus
j^*(\kappa)$, where $\kappa$ is the fiberwise tangent bundle to the
fibration $\pi$. Let the Chern classes of $\kappa$ be $k_1$, $k_2$,
and let the Chern classes of $\nu_i$ be $n_1,\ldots, n_a$.

Since the the polynomials $R_{A_0^i}$ are explicitly known (see
Section~\ref{known}), the integrand in (\ref{int}) is an explicit
polynomial, whose terms are of the form $f_!(j^*(k)p^*(n))$, where
$k$ is a monomial in $k_1$, $k_2$, and $n$ is a monomial in
$n_1,\ldots,n_a$. This term is further equal to
$$\pi_! j_! (j^*(k)p^*(n))=\pi_!(k j_!p^*(n))=\pi_!(k
q^*(i_!(n))).$$

The cohomology classes $i_!(n)$ are the geometric invariants of the
variety $V^a\subset \P^{4a+2}$---we want to calculate the number of
4-secant planes in terms of these invariants. These classes can be
encoded by integers, as follows.

\begin{definition} Let $h$ be the class represented by a hyperplane in
$H^*(\P^{4a+2})$, hence $H^*(\P^{4a+2})=\Q[h]/(h^{4a+3})$. For a
multiindex $u=(u_1,u_2,\ldots,u_a)$ let $\chi_u$ be the coefficient
of the appropriate power of $h$ in $i_!(n_1^{u_1}n_2^{u_2}\cdots
n_a^{u_a})$. (For example $\chi_{(0,\ldots,0)}$ is the degree of the
embedding $V\subset \P^{4a+2}$.)
\end{definition}

Using this notation, we obtain that our integrand can be written as
a linear combination of terms of the form $\pi_!(k\cdot q^*(h^w))$
($w\in \N$), with coefficients depending on the invariants $\chi_u$.

Let $S$ and $Q$ be the universal sub and quotient bundles over $\G$.
The space $\F$ is the projectivization of the bundle $S$.
Corresponding to this fact, we have the tautological exact sequence
of bundles $0\to l \to \pi^*S \to \pi^*S/l\to 0$ over $F$. Moreover,
$\kappa$ being the fiberwise tangent bundle, we have
$\kappa=l^*\otimes \pi^*S/l$. Using the fact that $q^*(h)$ is the
first Chern class of $l$, we obtain that the integrand can further
be written as linear combination of terms of the form
$$\pi_!( c_1(l)^w c_I(\pi^*(S))).$$
Here $c_I$ is any Chern monomial, and $w$ is a non-negative integer.
This term is further equal to
$$c_I(S)\pi_!(c_1(l)^w).$$

The cohomology ring of $\G$, together with the $\pi_!$-image of
powers of $c_1(l)$ are well known, see for example
\cite{fulton:int}:
$$H^*(\G)=Q[c_i(S),c_i(Q)]/(c(S)c(Q)=1),$$
$$\pi_!(c_1(l)^w)=c_{w-2}(Q).$$

Hence our integrand is an explicit class in $H^*(\G)$. Integration
can be utilized in any computer algebra package. The results we
obtain this way are as follows.

\begin{theorem}\label{ns}
Let $V^a\subset \P^{4a+2}$ be a smooth variety such that the
associated map $f:\B\to \G$ defined in (\ref{4secant}) is
admissible. Let $\chi_u$ be the invariants of the embedding. Then
for the number $N_a$ of 4-secant planes to $V^a$ we have

\begin{eqnarray*}
4!N_1& =&{{ \chi}_{{0}}}^{4}+24{ \chi}_{{1}}{ \chi}_{{0}}-6{
\chi}_{{1} }{{ \chi}_{{0}}}^{2}-208{{ \chi}_{{0}}}^{2}+24{{
\chi}_{{0}}}^{ 3}+3{{ \chi}_{{1}}}^{2}+1008{ \chi}_{{0}} -174{
\chi}_{{1}},
\end{eqnarray*}

\begin{eqnarray*}
4!N_2 &= &-36{ \chi}_{{1,0}}{ \chi}_{{0,1}}+64{ \chi}_{{2,0}}{
\chi}_{{0 ,0}}-3156{ \chi}_{{1,0}}{ \chi}_{{0,0}}+{{
\chi}_{{0,0}}}^{4}+36
{ \chi}_{{1,0}}{{ \chi}_{{0,0}}}^{2}\\
& & -6{ \chi}_{{0,1}}{{ \chi}_ {{0,0}}}^{2}-126{{
\chi}_{{0,0}}}^{3}+12075{{ \chi}_{{0,0}}}^{2}
+286{ \chi}_{{0,0}}{ \chi}_{{0,1}}-1356{ \chi}_{{2,0}}\\
& &-1944{
 \chi}_{{0,1}}-200838{ \chi}_{{0,0}}+3{{ \chi}_{{0,1}}}^{2}+108
{{ \chi}_{{1,0}}}^{2}+42174{ \chi}_{{1,0}},
\end{eqnarray*}

\begin{eqnarray*}
4!N_3 & = & -1728{ \chi}_{{1,0,0}}{ \chi}_{{0,1,0}}+91200{
\chi}_{{0,0,0}}{
 \chi}_{{1,0,0}}-6{ \chi}_{{0,0,1}}{{ \chi}_{{0,0,0}}}^{2}+384{
 \chi}_{{1,1,0}}{ \chi}_{{0,0,0}}
\\ & & -48{ \chi}_{{0,1,0}}{ \chi}_{{0
,0,1}}-144{ \chi}_{{0,0,0}}{ \chi}_{{0,0,1}}-4352{ \chi}_{{2,0,0
}}{ \chi}_{{0,0,0}}-26004{ \chi}_{{1,1,0}}+ \\
& &3{{ \chi}_{{0,0,1}}} ^{2}-9523080{ \chi}_{{1,0,0}}+{{
\chi}_{{0,0,0}}}^{4}+42058080{
 \chi}_{{0,0,0}}+614880{ \chi}_{{0,1,0}}\\
& &-23934{ \chi}_{{0,0,1}} -3156{ \chi}_{{3,0,0}}-448320{{
\chi}_{{0,0,0}}}^{2}+437400{
 \chi}_{{2,0,0}}+3888{{ \chi}_{{1,0,0}}}^{2}\\
& &+192{{ \chi}_{{0,1,0 }}}^{2}+720{{ \chi}_{{0,0,0}}}^{3}-5120{
\chi}_{{0,0,0}}{ \chi}
_{{0,1,0}}+216{ \chi}_{{1,0,0}}{ \chi}_{{0,0,1}} \\
& &-216{ \chi}_{{1 ,0,0}}{{ \chi}_{{0,0,0}}}^{2}+48{
\chi}_{{0,1,0}}{{ \chi}_{{0,0,0 }}}^{2},
\end{eqnarray*}

\begin{eqnarray*}
4!N_4 & = & 1280{ \chi}_{{1,0,1,0}}{ \chi}_{{0,0,0,0}}-1320{
\chi}_{{0,0,0, 1}}{ \chi}_{{1,0,0,0}}+853550{ \chi}_{{0,1,0,0}}{
\chi}_{{0,0,0,0
}}\\
&  & +1320{ \chi}_{{1,0,0,0}}{{ \chi}_{{0,0,0,0}}}^{2}-33024{ \chi
}_{{1,1,0,0}}{ \chi}_{{0,0,0,0}}+60{ \chi}_{{0,0,1,0}}{{ \chi}_{{0
,0,0,0}}}^{2}\\
& & -72600{ \chi}_{{0,1,0,0}}{ \chi}_{{1,0,0,0}}-330{
 \chi}_{{0,1,0,0}}{{ \chi}_{{0,0,0,0}}}^{2}-3300{ \chi}_{{0,0,1,0
}}{ \chi}_{{0,1,0,0}}\\
& & -60{ \chi}_{{0,0,1,0}}{ \chi}_{{0,0,0,1}}+ 6466{
\chi}_{{0,0,0,1}}{ \chi}_{{0,0,0,0}}-4290{{ \chi}_{{0,0,0
,0}}}^{3} \\
& & -6721080{ \chi}_{{1,0,0,0}}{ \chi}_{{0,0,0,0}}-92436{
 \chi}_{{0,0,1,0}}{ \chi}_{{0,0,0,0}}+247040{ \chi}_{{2,0,0,0}}{
 \chi}_{{0,0,0,0}}\\
& & +512{ \chi}_{{0,2,0,0}}{ \chi}_{{0,0,0,0}}+330 {
\chi}_{{0,0,0,1}}{ \chi}_{{0,1,0,0}}-309768{ \chi}_{{0,0,0,1}
}\\
& & +2126696220{ \chi}_{{1,0,0,0}}+13200{ \chi}_{{0,0,1,0}}{ \chi}
_{{1,0,0,0}}+300{{ \chi}_{{0,0,1,0}}}^{2}\\
& & +1272924{ \chi}_{{3,0,0 ,0}}+9382770{
\chi}_{{0,0,1,0}}-9023984640{ \chi}_{{0,0,0,0}}+
10379016{ \chi}_{{1,1,0,0}}\\
& & +3{{ \chi}_{{0,0,0,1}}}^{2}-104832{
 \chi}_{{0,2,0,0}}+145200{{ \chi}_{{1,0,0,0}}}^{2}-158489298{
 \chi}_{{0,1,0,0}}\\
& & -292860{ \chi}_{{1,0,1,0}}+9075{{ \chi}_{{0,1
,0,0}}}^{2}+{{ \chi}_{{0,0,0,0}}}^{4}-81576{ \chi}_{{2,1,0,0}}\\
& & -113973552{ \chi}_{{2,0,0,0}}+24962795{{
\chi}_{{0,0,0,0}}}^{2}-6 { \chi}_{{0,0,0,1}}{{
\chi}_{{0,0,0,0}}}^{2}.
\end{eqnarray*}

\end{theorem}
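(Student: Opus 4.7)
The plan is to execute in explicit Chern class coordinates the reduction sketched in equations~(\ref{int})--(\ref{4secant}), for each of the four values $a\in\{1,2,3,4\}$. First I would substitute into the integrand of (\ref{int}) the known residue polynomials $R_{A_0}=1$, $R_{A_0^2}=-c_\ell$ (Theorem~\ref{a2th}), $R_{A_0^3}$ (Theorem~\ref{a3th}), and the general quadruple point formula (\ref{general-main}) for $R_{A_0^4}$, all evaluated at the Chern classes of $\nu_f$ with $\ell=3a$. Since the source $\B$ has dimension $9a$, only classes of total degree at most $9a$ contribute to the final integral over $\G$, so the infinite sums appearing in $R_{A_0^4}$ truncate to finite sums. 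Using $\nu_f=p^\ast\nu_i\ominus j^\ast\kappa$, the integrand becomes an explicit polynomial in $j^\ast k_1,\ j^\ast k_2$ and $p^\ast n_1,\ldots,p^\ast n_a$, where $k_i=c_i(\kappa)$ and $n_j=c_j(\nu_i)$.

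Next I would push the integrand forward through $\B\xrightarrow{\,j\,}\F\xrightarrow{\,\pi\,}\G$ in two stages. By the projection formula and proper base change in the Cartesian square formed by $i,p,j,q$, any monomial of the form $j^\ast(k)\cdot p^\ast(n)$ satisfies $f_!(j^\ast(k)\,p^\ast(n))=\pi_!\bigl(k\cdot q^\ast(i_!(n))\bigr)$. The cohomology class $i_!(n)$ in $H^\ast(\P^{4a+2})$ is, by the very definition of the invariants $\chi_u$, a $\Q$-linear combination of powers of $h$ whose coefficients record precisely the $\chi_u$. On $\F$ one has $q^\ast h=c_1(l)$, and from the tautological sequence $0\to l\to\pi^\ast S\to\pi^\ast S/l\to 0$ the identification $\kappa=l^\ast\otimes \pi^\ast(S/l)$. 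Expanding $k_1,k_2$ via this identification converts every monomial in the integrand into a polynomial in $c_1(l)$ with coefficients in $\pi^\ast H^\ast(\G)$.

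After applying $\pi_!$ and using the classical identity $\pi_!(c_1(l)^w)=c_{w-2}(Q)$ together with the projection formula, the whole integrand is rewritten as an explicit class in $H^\ast(\G)=\Q[c_i(S),c_i(Q)]/(c(S)c(Q)=1)$ whose coefficients are $\Q$-polynomials in the $\chi_u$. The remaining integral $\int_\G$ is then a finite linear functional on this polynomial ring, fully determined by Schubert calculus on $\G=\Gr_3\C^{4a+3}$. Collecting the coefficients of the $\chi_u$ yields the four stated polynomials. The only genuine obstacle is the sheer size of the calculation: for $a=4$ the integrand is a polynomial of degree $12$ in many Chern class variables, so the expansion and integration must be delegated to a computer algebra system. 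Admissibility of $f$ is part of the hypothesis, so no transversality argument is required; the conceptual content is contained entirely in the quadruple point formula (\ref{general-main}) combined with the standard Grassmannian calculus just invoked.
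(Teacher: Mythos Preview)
Your proposal is correct and follows essentially the same route as the paper: expand the integrand of~(\ref{int}) via the known residue polynomials (including~(\ref{general-main})), rewrite each term using $\nu_f=p^*\nu_i\ominus j^*\kappa$ and the push-pull identity $f_!(j^*(k)\,p^*(n))=\pi_!\bigl(k\cdot q^*(i_!(n))\bigr)$, express $i_!(n)$ through the $\chi_u$, use $\kappa=l^*\otimes\pi^*(S/l)$ and $\pi_!(c_1(l)^w)=c_{w-2}(Q)$ to land in $H^*(\G)$, and integrate by Schubert calculus with computer assistance. The paper likewise indicates that the final step is carried out in a computer algebra package.
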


We note that the expression for $N_2$ has appeared in \cite{tt} and
\cite{lehn99}, in the language of Hilbert schemes (and in the
variables $d=\chi_{0,0}$, $\pi=\chi_{1,0}-11\chi_{0,0}$,
$\kappa=\chi_{2,0}-22\chi_{1,0}+121\chi_{0,0}$,
$e=-\chi_{0,1}+\chi_{2,0}-11\chi_{1,0}+55\chi_{0,0}$), but we
believe that $N_3$ and beyond are new results. Expressions for
$N_{>4}$, as well as formulas counting 4-secant linear spaces of
higher dimensions can be obtained similarly.

\begin{remark} \label{unpleasant} Theorem \ref{ns} contains the unpleasant
condition that the associated map is admissible. Looking through the
literature on enumerative geometry using topological methods we find
that authors explicitly or implicitly suppose similar admissibility
properties. Namely, the following seems to be a general belief: when
starting with a geometric situation one associates a map between
parameter spaces, and the map is not a Legendre or Lagrange map
(e.g. its relative dimension is $>1$), then the map is admissible,
provided some genericity condition holds. We are not able to phrase
(let alone prove) such a genericity condition, under which the
admissibility property of the associated map holds.
\end{remark}

\section{Another Multisingularity Formula}
\label{III22A0}

The interpolation method described in Section \ref{interpolation}
can be applied to find finite initial sums of the series describing
the general multisingularity polynomials. If the multisingularity is
complicated enough, recognizing and proving the pattern in such
final sums quickly become intractable. An exception is given by the
theorem below. We will use the following versions of Schur
polynomials
\begin{equation} \label{schur}
s(i,j,k) = \det \left( \begin{array}{ccc}
                   c_i & c_{i+1} & c_{i+2} \\
                   c_{j-1}& c_j& c_{j+1}\\
                   c_{k-2}&c_{k-1}& c_{k}
               \end{array} \right), \qquad
s(i,j) = \det \left( \begin{array}{ccc}
                   c_i & c_{i+1}  \\
                   c_{j-1}& c_j   \\
               \end{array} \right).
\end{equation}

\begin{theorem}\label{III22} The general $III_{2,2}A_0$-multisingularity
residue polynomial for maps of relative dimension~$\ell$ is
\begin{equation} \label{RHS} R_{III_{2,2}A_0}= -\sum_{i=1}^{\infty}
2^{i+1}s(\ell+1+i,\ell+2,\ell+1-i).
\end{equation}
\end{theorem}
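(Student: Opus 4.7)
The plan is to adapt the interpolation strategy of Section~\ref{proof} to the instance of Kazarian's formula~(\ref{multiform}) for $\alphabar=(III_{2,2},A_0)$, which reads
\begin{equation*}
m_{III_{2,2}A_0}=R_{III_{2,2}A_0}+R_{III_{2,2}}\,f^*(n_{A_0}).
\end{equation*}
By Theorem~\ref{equations_are_enough} it is enough to verify this identity in $G_\xi$-equivariant cohomology for every stable representative $\xi$ of a monosingularity of codimension $\leq 3\ell+4=\codim(III_{2,2}A_0)$. From the table in Section~\ref{zoo} these prototypes are $A_0,A_1,A_2,III_{2,2},A_3$, and $I_{2,2}$; the first five are already written down in Section~\ref{prototypes}, and I would produce an analogous universal unfolding and maximal compact symmetry group for $I_{2,2}$ by the recipe of~\cite{rl}.

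For each prototype $\xi$, Lemma~\ref{adjlemma} reduces the pullback $f^*(n_{A_0})=n_1(\xi)$ to the equivariant Euler class of the virtual normal bundle $\nu(\xi)$, while $R_{III_{2,2}}$ evaluated at the Chern series of $\nu(\xi)$ is either imported from the known Thom polynomial of $III_{2,2}$ or reconstructed on the spot by interpolation at the lower codimension $2\ell+4$. The geometric class $\bar m_{III_{2,2}A_0}(\xi)$ vanishes automatically on each Morin prototype $A_1,A_2,A_3$, whose closures meet no $\Sigma^{\geq 2}$ stratum; the analogous vanishing statement for $f_{I_{2,2}}$ would be verified by checking the non-adjacency of local algebras $\C[x,y]/(x^2,xy,y^2)\not\subset\overline{\C[x,y]/(xy,x^2+y^2)}$. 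On $f_{III_{2,2}}$ itself the class is concentrated at the origin, with equivariant multiplicity equal to the scheme-theoretic intersection of the closure of the double-point divisor with the $III_{2,2}$-stratum $\{0\}$, read off from the explicit germ displayed in Section~\ref{prototypes}.

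The output is a short list of substitution identities any codimension $3\ell+4$ residue polynomial must satisfy, and I would then verify that the candidate $-\sum_{i\geq 1}2^{i+1}\,s(\ell+1+i,\ell+2,\ell+1-i)$ meets each of them. The vanishing cases I expect to drop out cleanly of a repeated-column phenomenon inside each $3\times 3$ Jacobi--Trudi determinant once the rational Chern series of a low-codimension prototype is plugged in; the nonvanishing identity on $f_{III_{2,2}}$ is the computational heart of the proof. Once all the substitution values match, Theorem~\ref{equations_are_enough} forces~(\ref{RHS}).

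The main obstacle is concentrated on $f_{III_{2,2}}$: first, correctly extracting the equivariant multiplicity of $\bar m_{III_{2,2}A_0}$ at the origin of the Whitney-umbrella-type germ $(x_1,x_2)\mapsto(x_1x_2,x_1^2+cx_1+ax_2,x_2^2+bx_1+dx_2,\ldots)$, and second, recognising the resulting symmetric rational expression as the infinite Schur-determinant sum in (\ref{RHS}). The latter is in effect a closed-form identity of the same flavour as the $A_3$ Thom series manipulation recalled in Section~\ref{known}, and I would attack it by repackaging the Schur sum as an iterated residue and matching coefficients, rather than by brute term-by-term expansion.
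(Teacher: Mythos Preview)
Your outline has the right architecture, but you have mislocated the computational weight, and the geometric input you intend to use on the two $\Sigma^2$ prototypes is wrong in both cases.

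On $f_{III_{2,2}}$ the class $m_{III_{2,2}A_0}$ is \emph{zero}, not ``concentrated at the origin''. The multisingularity $III_{2,2}A_0$ has source codimension $3\ell+4$, while the source of the stable $III_{2,2}$ germ has dimension $2\ell+4$; for an admissible map the locus is therefore empty. Your proposed description as the intersection of the double-point divisor with the $III_{2,2}$-point confuses $III_{2,2}A_0$ (a $III_{2,2}$ point whose image has an \emph{extra} $A_0$ preimage) with ``a $III_{2,2}$ point lying on the double locus'', which is a different thing. Your adjacency claim for $I_{2,2}$ is also backwards: since $\codim III_{2,2}=2\ell+4<3\ell+4=\codim I_{2,2}$, one has $I_{2,2}\subset\overline{III_{2,2}}$, so the $III_{2,2}$ stratum \emph{is} present in the $I_{2,2}$ unfolding. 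The correct reason $m_{III_{2,2}A_0}(I_{2,2})=0$ is that $I_{2,2}$ and $III_{2,2}A_0$ have equal codimension, hence neither lies in the closure of the other.

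The consequence is that on both $f_{I_{2,2}}$ and $f_{III_{2,2}}$ the identity to check is $0=\mathcal{R}+R_{III_{2,2}}\,n_{A_0}$, with \emph{both} terms on the right individually nonzero. This cancellation does not come from a repeated-column vanishing of the $3\times3$ determinants; that mechanism only works on the Morin prototypes $A_r$ (where indeed $c_{j+1}=ac_j$ for $j\geq\ell+2$ forces two proportional rows). On $I_{2,2}$ one has to actually evaluate the Schur sum. The paper does this via the Factorization Formula for Schur functions, obtaining $\mathcal{R}|_{I_{2,2}}=-4d_\ell\,s(\ell+2,\ell+2)$, which then visibly cancels $R_{III_{2,2}}\,n_{A_0}=s(\ell+2,\ell+2)\cdot 4d_\ell$; the $III_{2,2}$ case follows by specializing one Chern root. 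So the ``computational heart'' is the $I_{2,2}$ restriction, not $III_{2,2}$, and it requires a genuine Schur-polynomial identity rather than the determinant degeneracy you anticipate.
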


\begin{proof}
Let us denote the right hand side of equation (\ref{RHS}) by
$\mathcal{R}$. We will show that $\mathcal{R}$ satisfies the
defining relation of the residue polynomial $R_{III_{2,2}A_0}$, that
is, we will show
\begin{equation}\label{III22main}
m_{III_{2,2}A_0} = \mathcal{R} + R_{III_{2,2}}n_{A_0}
\end{equation}
for all admissible maps. The Giambelli-Thom-Porteous formula states
that $R_{III_{2,2}}=s(\ell+2,\ell+2)$.
Theorem~\ref{equations_are_enough} asserts that if (\ref{III22main})
holds for stable representatives of $A_0$, $A_1$, $A_2$, $A_3$,
$I_{2,2}$, and $III_{2,2}$ singularities (in equivariant cohomology
with respect to the maximal compact symmetry group of the particular
singularity), then (\ref{III22main}) holds for admissible maps.
Below we prove these statements.

\subsection{Restriction to $A_r$ singularities.}
Stable representatives of $\ell$ relative dimensional $A_r$
singularities are universal unfoldings of germs $\C\to \C^{\ell+1}$
$$(x)\mapsto (x^{r+1},0,\ldots,0).$$
Their maximal compact symmetry group is $U(1) \times U(\ell)$. The
formal difference of the representation on the target and the source
is
$$\rho_1^{r+1} \oplus \rho_\ell - \rho_1,$$
where $\rho_1$ and $\rho_\ell$ are the standard representations of
the $U(1)$ and $U(\ell)$ factors. Therefore the Chern classes $c_i$
of the stable representative of $A_r$ are obtained by
\begin{equation}\label{aichern}
1+c_1t+c_2t^2+\ldots = \frac{1-(r+1)a t}{1-at}\sum_{i=0}^\ell
d_it^i,\qquad (d_0=1)\end{equation} where $-a$ is the first Chern
class of $U(1)$, and $d_i$ are the Chern classes of $U(\ell)$.
Observe that relation (\ref{aichern}) implies $c_{j+1}=a c_j$ for
$j\geq \ell+2$. Therefore the first two rows of each term of
$\mathcal{R}$ are linearly dependent, making each determinant 0.
Hence $\mathcal{R}=0$ applied to any $A_r$ singularity.

Since $III_{2,2}$ is a $\Sigma^2$ singularity, and all $A_r$'s are
$\Sigma^1$ singularities, near an $A_r$ singularity there are no
$III_{2,2}$ or $III_{2,2}A_0$ (multi)singularities. This implies
that $R_{III_{2,2}}=m_{III_{2,2}}$ and $m_{III_{2,2}A_0}$ applied to
stable representatives of all $A_r$ singularities are both 0.
Therefore we proved that (\ref{III22main}) holds for all $A_r$
singularities.

\subsection{Restriction to $I_{2,2}$ singularities.} Stable
singularities of type $I_{2,2}$ of relative dimension $\ell$ are
universal unfoldings of the germ $\C^2\to \C^{\ell+2}$
$$(x,y)\mapsto (x^2,y^2,0,\ldots,0).$$
The  maximal compact symmetry group of this germ is $U(1)^2 \times
U(\ell)$, and the formal difference of the representations of this
group on the target and on the source is:
$$\rho_1^2 \oplus \rho_1^{'2} \oplus \rho_{\ell} - (\rho_1 \oplus
\rho_1').$$ Here $\rho_1$ and $\rho_1'$ are the standard
representations of the two $U(1)$ factors, and $\rho_\ell$ is the
standard representation of $U(\ell)$. Therefore the Chern classes
$c_i$ of the stable representative of $I_{2,2}$ are obtained by
\begin{equation}\label{i22chern}
1+c_1t+c_2t^2+\ldots = \frac{(1-2a
t)(1-2bt)}{(1-at)(1-bt)}\sum_{i=0}^\ell d_it^i,\end{equation} where
$-a$ and $-b$ are the first Chern classes of the two $U(1)$ factors,
and $d_i$ are the Chern classes of $U(\ell)$.  We need the following
lemma.

\begin{lemma} Let $e_i$ and $h_i$ denote the elementary, and
complete symmetric polynomials of the variables $a$ and $b$ (e.g.
$e_2=ab$, $h_2=a^2+ab+b^2$). Suppose the variables $c_i$ are
expressed in terms of $a$, $b$, and $d_1,\ldots,d_\ell$ as defined
in (\ref{i22chern}). We use the convention that $d_0=1$, $d_{<0}=0$
and $d_{>\ell}=0$. Then
$$s(\alpha, \beta, \gamma)=e_2^{\beta-\ell-2}\cdot
h_{\alpha-\beta}\cdot s(\ell+2,\ell+2)\cdot
(d_\gamma-2e_1d_{\gamma-1}+4 e_2d_{\gamma-2}),$$ for $\alpha\geq
\beta\geq \gamma$, $\beta\geq \ell+2$, and $\gamma\leq \ell$.
\end{lemma}

\begin{proof} The Factorization Formula for Schur polynomials
(e.g. \cite{fp}) claims that substituting
$$1+c_1t+c_2t^2+\ldots=\frac{\sum_{i=0}^{\ell+2}
D_it^i}{(1-at)(1-bt)}$$ into $s(\alpha,\beta,\gamma)$ yields $
e_2^{\beta-\ell-2}h_{\alpha-\beta}s(\ell+2,\ell+2)D_\gamma$.
Carrying out the further substitution $\sum_{i=0}^{\ell+2}
D_it^i=(\sum_{i=0}^\ell d_it^i)(1-2at)(1-2bt)$ gives the statement
of the lemma.
\end{proof}

A special case of this lemma claims that for $j\geq 1$ we have
$$s(\ell+1+j,\ell+2,\ell+1-j)=h_{j-1}\cdot s(\ell+2,\ell+2) \cdot
(d_{\ell+1-j}-2e_1d_{\ell-j}+4e_2d_{\ell-1-j}).$$ Plugging this into
the formula for $\mathcal{R}$ we obtain a linear function of the
$d_i$ variables. The coefficient of $d_{\ell-k}$ for $k>0$ is
$$-2^k\cdot 4 e_2h_{k-2} -2^{k+1}(-2e_1)h_{k-1} -2^{k+2}h_k.$$
Dividing this expression by $-2^{k+2}$ we obtain
$e_2h_{k-2}-e_1h_{k-1}+h_k$, which is the $k$'th coefficient of the
power series
$$(1-e_1t+e_2t)(1+h_1t+h_2t^2+\ldots)=\frac{(1-at)(1-bt)}{(1-at)(1-bt)}=1,$$
hence it is 0. We obtain that substituting (\ref{i22chern}) into the
expression $\mathcal{R}$ is $-4d_\ell\cdot s(\ell+2,\ell+2)$.
Lemma~\ref{adjlemma} implies that
$n_{A_0}=(-2a)(-2b)d_\ell/((-a)(-b))=4d_\ell$ for the germ
$I_{2,2}$. Thus we proved that formula (\ref{III22main}) holds for
stable representatives of $I_{2,2}$ singularities.

\subsection{Restriction to $III_{2,2}$ singularities.} Stable
singularities of type $III_{2,2}$ of relative dimension $\ell$ are
universal unfoldings of the germ $\C^2\to \C^{\ell+2}$
$$(x,y)\mapsto (x^2,y^2,xy,0,\ldots,0).$$
The maximal torus of the maximal compact symmetry group of this germ
is $U(1)^2 \times U(\ell-1)$, and the formal difference of the
representations of this group on the target and on the source is:
$$\rho_1^2 \oplus \rho_1^{'2} \oplus (\rho_1\otimes \rho_1') \oplus \rho_{\ell-1} - (\rho_1 \oplus
\rho_1').$$ Here $\rho_1$ and $\rho_1'$ are the standard
representations of the two $U(1)$ factors, and $\rho_{\ell-1}$ is
the standard representation of $U(\ell-1)$. Therefore, the Chern
classes $c_i$ of the stable representative of $I_{2,2}$ are obtained
by
\begin{equation}\label{iii22chern}
1+c_1t+c_2t^2+\ldots = \frac{(1-2a
t)(1-2bt)(1-(a+b)t)}{(1-at)(1-bt)}\sum_{i=0}^{\ell-1}
d_it^i,\end{equation} where $-a$ and $-b$ are the first Chern
classes of the two $U(1)$ factors, and $d_i$ are the Chern classes
of $U(\ell)$.

This shows that substituting (\ref{iii22chern}) into $\mathcal{R}$
can be obtained by first substituting (\ref{i22chern}) into
$\mathcal{R}$, then plugging in $d_\ell=-(a+b)$. The same holds for
the other terms of (\ref{III22main}) as well, hence the satisfaction
of formula (\ref{III22main}) for substitution (\ref{iii22chern})
follows from the fact that it is satisfied for the substitution
(\ref{i22chern}).

\smallskip

The proof of Theroem \ref{III22} is complete. \end{proof}

\begin{remark} \rm
One can consider applications of the $III_{2,2}A_0$-formula in
enumerative geometry along the lines of Section \ref{4-secants}. The
outcome of such a calculation is then the number (or cohomology
class) of $k$-planes in $\Pn^N$ that have two common points with a
fixed smooth projective variety $V\subset \Pn^N$; one common point
is a transversal intersection, and the other is a singular one, with
singularity $III_{2,2}$.
\end{remark}

\bibliographystyle{plain}
\bibliography{quad}

\end{document}